\title{Une note sur les intervalles de Tamari}
\author{F. Chapoton}
\date{\today}
\newtheorem{theorem}{Théorème}[section] 
\newtheorem{proposition}[theorem]{Proposition} 
\newtheorem{conjecture}[theorem]{Conjecture} 
\newtheorem{lemma}[theorem]{Lemma}
\newtheorem{remark}[theorem]{Remarque}
\newenvironment{proof}{\begin{trivlist}\item{\bf{Preuve.}}}
  {\hfill\rule{2mm}{2mm}\end{trivlist}}
\newcommand{\Int}{\operatorname{Int}}
\newcommand{\Tam}{\operatorname{\mathbf{Tam}}}
\newcommand{\quat}{\mathbb{D}}
\newcommand{\comp}{\mathcal{C}}
\newcommand{\QQ}{\mathbf{Q}}
\newcommand{\couvert}{\triangleleft}
\newcommand{\LL}{\mathsf{LL}}
\newcommand{\LR}{\mathsf{LR}}
\newcommand{\RR}{\mathsf{RR}}
\newcommand{\ab}{\overline{a}}
\newcommand{\xb}{\overline{x}}
\newcommand{\yb}{\overline{y}}
\newcommand{\gch}{\includegraphics[height=4mm]{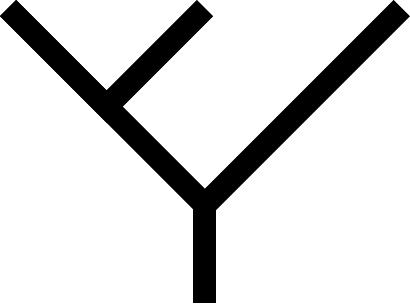}}
\newcommand{\drt}{\includegraphics[height=4mm]{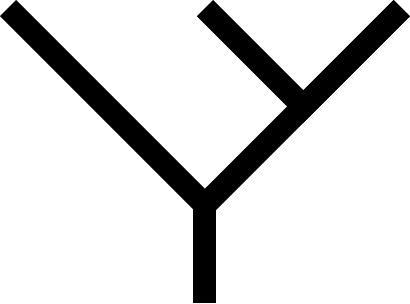}}
\begin{document}
\maketitle

\begin{abstract}
  À tout ordre partiel $P$, on associe un polynôme $\quat_P$ en quatre
  variables, qui énumère les intervalles dans $P$ en fonction
  de quatre paramètres dont la description utilise un ordre partiel
  naturel sur les intervalles.

  On s'intéresse aux symétries de cet invariant général lorsqu'il est
  appliqué à une famille importante d'ordres partiels, les treillis de
  Tamari. On obtient une symétrie ternaire pour une spécialisation du
  polynôme (en utilisant une équation fonctionnelle et une équation
  algébrique pour la série génératrice) et une conjecture sur une
  symétrie globale du polynôme. On décrit le sous-ensemble connu des
  intervalles synchrones des treillis de Tamari en terme d'une facette
  dans un polytope de Newton. On relie une autre spécialisation du
  polynôme aux statistiques provenant de la canopée des arbres
  binaires plans.
\end{abstract}




\section{Polynômes de valence et posets d'intervalles}

Soit $P$ un poset (ensemble muni d'un ordre partiel) fini. On peut lui associer un polynôme en $2$
variables 
\begin{equation*}
  D_P(a,\ab) = \sum_{u \in P} a^{\operatorname{out}(u)} \ab^{\operatorname{in}(u)},
\end{equation*}
qui compte les sommets de $P$ selon leur valence
sortante (pour $a$) et entrante (pour $\ab$) dans le diagramme de Hasse
de $P$. On convient d'orienter les arêtes du diagramme de Hasse dans
le sens croissant pour le poset : $u \to v$ implique $u \leq v$. On va aussi utiliser la notation $u \couvert v$ pour la relation de couverture.

Par exemple, pour le treillis de Tamari $\Tam_3$ ayant un diagramme de
Hasse en forme de pentagone, on trouve
\begin{equation*}
  D_{\Tam_3}(a,\ab) = a^2 + 3 a \ab + \ab^2.
\end{equation*}

Pour un produit cartésien de posets $P \times Q$, le polynôme $D_{P \times Q}$ est le produit des polynômes $D_P$ et $D_Q$. Le polynôme $D_{P^\star}$ pour le poset dual $P^\star$ est
\begin{equation*}
  D_{P^\star}(a, \ab) = D_P(\ab, a).
\end{equation*}
Le polynôme $D_P$ est donc symétrique en $a$ et $\ab$ lorsque le poset $P$ est auto-dual.

En général, le polynôme $D_P$ n'est pas homogène et ne présente pas de
régularité visible au niveau de son support. Par exemple pour l'ordre
de Bruhat sur le groupe symétrique, rien ne saute
aux yeux.

\bigskip

À chaque poset $P$, on peut par ailleurs associer le poset de ses
intervalles $\Int(P)$. C'est un ordre partiel sur l'ensemble des
paires $(u,v)$ avec $u \leq v$ dans $P$, donné par la relation
$(u,v) \leq (u', v')$ si et seulement si $u \leq u'$ et $v \leq
v'$. Les couvertures d'un intervalle $(u,v)$ sont soit de la forme
$(u',v)$ pour $u'$ une couverture de $u$ dans $P$ qui reste inférieure
à $v$, soit de la forme $(u,v')$ pour une couverture quelconque $v'$ de $v$ dans $P$.

Le poset des intervalles $\Int(P^\star)$ du poset dual $P^\star$ est le dual du
poset des intervalles de $P$. Le poset des intervalles d'un produit
cartésien $P \times Q$ est le produit cartésien des posets
d'intervalles de $P$ et de $Q$.

\medskip

Dans le cas d'un poset d'intervalles, on peut raffiner le polynôme
$D_{\Int(P)}$ en introduisant quatre variables $x,y,\yb,\xb$ qui correspondent respectivement à quatre types d'arêtes incidentes au sommet $(u,v)$ dans le diagramme de Hasse de $\Int(P)$ :
\begin{itemize}
\item $x$ : arête sortante $(u,v) \couvert (u',v)$ avec $u \couvert u' \leq v$,
\item $y$ : arête sortante $(u,v) \couvert (u,v')$ avec $v \couvert v'$,
\item $\yb$ : arête entrante $(u',v) \couvert (u,v)$ avec $u' \couvert u$,
\item $\xb$ : arête entrante $(u,v') \couvert (u,v)$ avec $u \leq v' \couvert v$.
\end{itemize}
On définit donc ainsi un polynôme $\quat_P(x,y,\yb,\xb)$ en $4$
variables, comme la somme sur l'ensemble des intervalles des monômes
$x^\star y^\star \yb^\star \xb^\star$ dont les puissances décrivent
les arêtes entrantes et sortantes pour un intervalle $(u,v)$.

Le polynôme associé au poset dual $P^\star$ est donné par
\begin{equation*}
  \quat_{P^\star}(x,y,\yb,\xb) = \quat_{P}(\xb,\yb,y,x).
\end{equation*}
Lorsque le poset $P$ est auto-dual, le poset $\Int(P)$ l'est aussi et
le polynôme $\quat_{P}(x,y,\yb,\xb)$ est invariant sous l'involution
$x \leftrightarrow \xb$ et $y \leftrightarrow \yb$.

Pour un produit cartésien de posets $P \times Q$, le polynôme $\quat_{P \times Q}$ est le produit des polynômes $\quat_P$ et $\quat_Q$.

On retrouve bien sûr le polynôme plus simple en deux variables $a,\ab$
par la substitution
\begin{equation*}
  D_{\Int(P)}(a,\ab) = \quat_{P}(a,a,\ab,\ab).
\end{equation*}

\textbf{Convention :} par abus de notation, on dira qu'un intervalle
$(u,v)$ dans $P$ est de degré $d$ par rapport à un sous-ensemble de
$\{x,y,\yb,\xb\}$ si le monôme correspondant dans $\quat_{P}$ est de
degré total $d$ par rapport à cet ensemble de variables.

\section{Le cas des treillis de Tamari}

Les treillis de Tamari et leurs intervalles ont fait l'objet de
nombreuses recherches récentes, voir entre autres articles \cite{CCP,chapoton_SLC,BM_et_al1,BM_et_al2,bebo,Fang_PR,PR_viennot} et le volume \cite{livre_tamari}.

\label{section-tamari}

On va utiliser les conventions suivantes pour les treillis de Tamari
$\Tam_n$. Le treillis $\Tam_n$ est vu comme un ordre partiel sur les
arbres binaires plans avec $n$ sommet internes et $n+1$ feuilles,
qu'on va dessiner comme dans la figure ci-dessous. On utilisera le mot
\textit{sommet} pour signifier sommet interne. Les arbres binaires
plans ont leur racine (qui est un sommet interne) en bas et croissent
vers le haut. L'arbre minimum est le peigne droit (dont aucun sommet
n'a de fils gauche), l'arbre maximum est le peigne gauche (dont aucun
sommet n'a de fils droit). La relation d'ordre est la clôture
transitive de la relation de couverture, qui est la rotation des
arbres binaires plans (dans le sens $\drt \to \gch$). On rappelle que le poset $\Tam_n$ est
auto-dual, par le biais du renversement gauche-droite des arbres
binaires. La \textit{canopée} d'un arbre binaire à $n+1$ feuilles est
un suite de $n+1$ lettres $L$ ou $R$, chaque lettre décrivant
l'orientation d'une feuille, les feuilles étant considérées de gauche
à droite. Par exemple, l'arbre binaire
\begin{center}
  \label{arbre}
  \includegraphics[height=2cm]{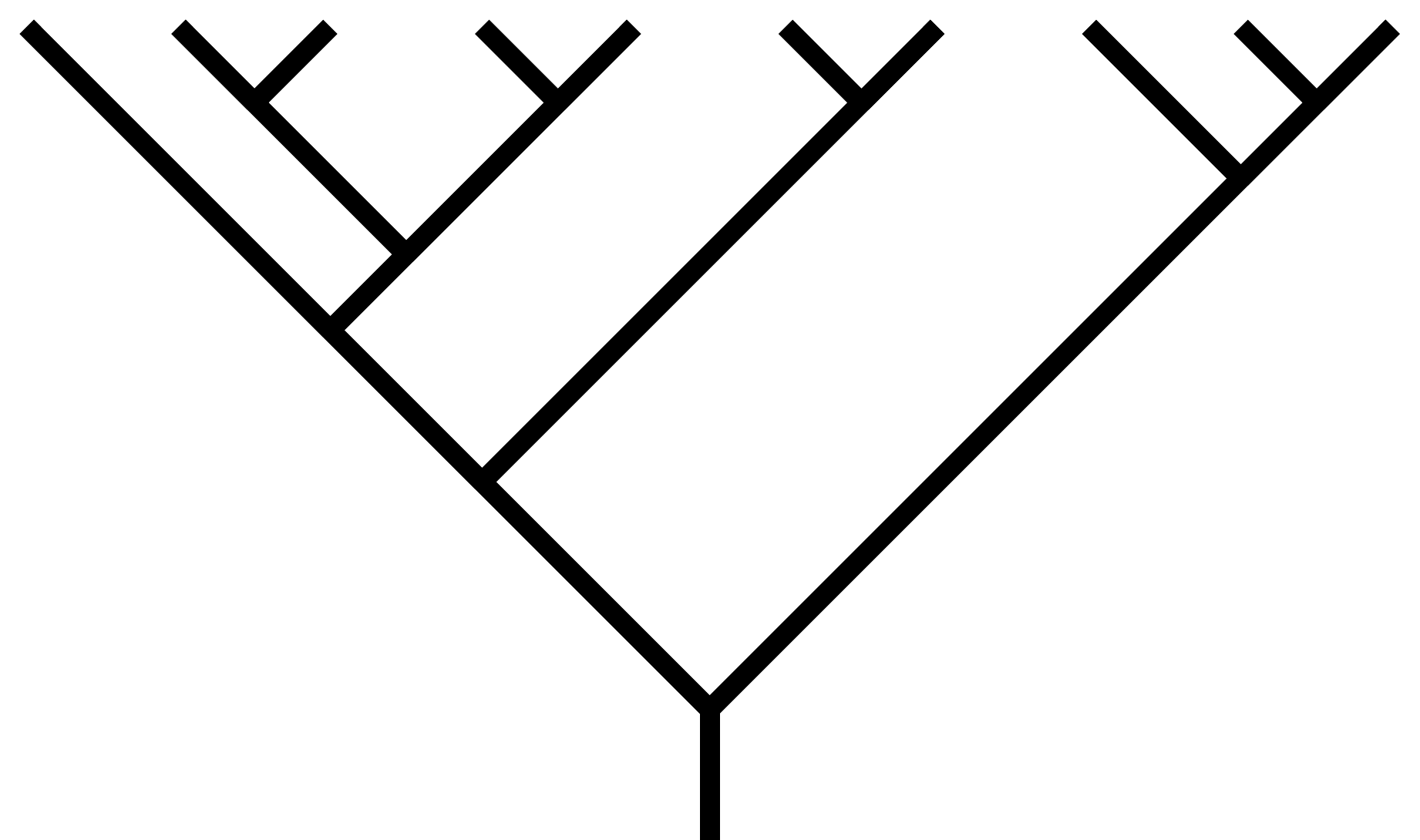}
\end{center}
a pour canopée $(L,L,R,L,R,L,R,L,L,R)$.

\medskip

On observe, en calculant les premiers polynômes
$D_{\Int(\Tam_n)}(a,\ab)$, la propriété remarquable que leur support
(ensemble des monômes) forme un triangle
\begin{equation*}
  \{(i,j) \mid i+j \geq n-1, i \leq n-1, j \leq n-1\}.
\end{equation*}
Voici les premiers de ces triangles pour $n=1,2,3,4,5$ (avec l'origine
des coordonnées en bas à gauche) :
\begin{equation*}
\left(\begin{array}{r}
1
\end{array}\right)\left(\begin{array}{rr}
1 & 1 \\
 & 1
\end{array}\right)\left(\begin{array}{rrr}
1 & 3 & 2 \\
 & 3 & 3 \\
 &  & 1
\end{array}\right)\left(\begin{array}{rrrr}
1 & 6 & 11 & 4 \\
 & 6 & 16 & 11 \\
 &  & 6 & 6 \\
 &  &  & 1
\end{array}\right)\left(\begin{array}{rrrrr}
1 & 10 & 35 & 36 & 9 \\
 & 10 & 50 & 86 & 36 \\
 &  & 20 & 50 & 35 \\
 &  &  & 10 & 10 \\
 &  &  &  & 1
\end{array}\right).
\end{equation*}

Le polynôme raffiné $\quat_n = \quat_{\Tam_n}$ est encore plus remarquable, d'abord par la symétrie ternaire suivante, inattendue.
\begin{theorem}
  \label{principal}
\item[(A)] Le polynôme $\quat_n(x,y,\yb,1)$ est totalement symétrique en $x,y,\yb$.
\item[(B)] Le polynôme $\quat_n(1,y,\yb,\xb)$ est totalement symétrique en $y,\yb,\xb$.
\end{theorem}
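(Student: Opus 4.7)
La dualité $\quat_n(x,y,\yb,\xb)=\quat_n(\xb,\yb,y,x)$ donne, en spécialisant $\xb=1$,
\begin{equation*}
\quat_n(x,y,\yb,1)=\quat_n(1,\yb,y,x),
\end{equation*}
de sorte que les polynômes intervenant dans (A) et (B) coïncident à renversement près de l'ordre des trois variables libres. Les deux assertions sont donc logiquement équivalentes, et je concentrerais tout l'effort sur (A).

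Pour (A), la stratégie naturelle est de travailler au niveau d'une série génératrice raffinée
\begin{equation*}
F(t,s;x,y,\yb)=\sum_{n\geq 0} t^n \sum_{(u,v)\in\Int(\Tam_n)} s^{c(u,v)}\, x^{\alpha(u,v)}\,y^{\beta(u,v)}\,\yb^{\gamma(u,v)},
\end{equation*}
où les exposants $\alpha,\beta,\gamma$ sont les trois statistiques de la section~\ref{section-tamari} et $s$ est une variable catalytique marquant un paramètre bien choisi $c(u,v)$ (par exemple la longueur du bord droit commun à $u$ et $v$, ou un nombre adapté de contacts). Une décomposition récursive des intervalles, typiquement selon l'arête racine, devrait fournir une équation fonctionnelle polynomiale en $F(t,s)$, $F(t,1)$ et $s$, dans laquelle les quatre types d'arêtes incidentes se traduisent par des facteurs explicites en $x$, $y$, $\yb$.

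L'étape suivante est l'élimination de la variable catalytique par la méthode du noyau, produisant une équation algébrique pour $F(t;x,y,\yb):=F(t,1;x,y,\yb)$. Il suffit alors de vérifier que cette équation est invariante par les permutations de $(x,y,\yb)$ : l'unicité de la solution formelle en $t$ (avec $F|_{t=0}=1$) transporte cette symétrie à $F$ elle-même, et donc à chaque coefficient $\quat_n(x,y,\yb,1)$.

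L'obstacle principal me paraît à la fois combinatoire et algébrique. Aucune des transpositions de $(x,y,\yb)$ ne possède d'interprétation par une involution évidente sur les intervalles : $y$ ne dépend que de $v$, $\yb$ ne dépend que de $u$, tandis que $x$ dépend conjointement des deux. La symétrie ternaire doit donc vraiment émerger de la structure algébrique sous-jacente, et non d'une bijection transparente. Le choix de la statistique catalytique $c(u,v)$ devient alors délicat : il faut qu'elle soit compatible avec les trois paramètres sans privilégier l'un d'entre eux, et le suivi des contributions durant la décomposition récursive est la partie technique la plus sensible. À défaut d'accéder directement à la symétrie complète, on pourrait chercher à isoler d'abord une sous-symétrie par une involution combinatoire — par exemple via le codage en paires de chemins de Dyck ou via les posets d'intervalles à la Chapoton-Châtel-Pons — et réserver la dernière transposition à l'argument algébrique.
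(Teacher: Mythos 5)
Votre schéma général — ramener (B) à (A) par l'auto-dualité $\quat_n(x,y,\yb,\xb)=\quat_n(\xb,\yb,y,x)$, poser une équation fonctionnelle catalytique pour une série raffinée, éliminer la variable catalytique pour obtenir une équation algébrique, puis observer sur celle-ci la symétrie ternaire qui se transmet à l'unique solution formelle en $t$ — est exactement la trame suivie dans les sections \ref{series_gens} et \ref{algeb}. Vous avez aussi raison de repérer que $y$ ne dépend que du maximum, $\yb$ que du minimum, et $x$ des deux à la fois : c'est bien là que réside la difficulté.

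Mais tel quel, votre plan n'est pas encore une preuve et comporte deux lacunes concrètes. D'abord, vous envisagez \emph{une seule} variable catalytique $s$ attachée à un paramètre $c(u,v)$ à choisir ; or le papier insiste sur la nécessité de \emph{deux} variables catalytiques $u,v$ pour suivre $x$ en même temps que $y$ et $\yb$, ces deux variables encodant le découpage du bord gauche du maximum $T$ en une partie basse $U$ et une partie haute $V$ dont la frontière est déterminée par la taille du bloc racine du minimum $S$. La décomposition pertinente n'est pas une décomposition « selon l'arête racine » générique mais le découpage le long du bord gauche, qui couple deux séries $\Phi$ (tous les intervalles) et $\Theta$ (intervalles indécomposables) et conduit aux équations \eqref{eq_Phi} et \eqref{eq_Theta}. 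Ensuite, le suivi de $x$ est le point réellement délicat : il ne compte pas toutes les arêtes $/$ du minimum, mais seulement celles dont la rotation reste inférieure au maximum, ce qui exige l'analyse des deux types de couvertures (type I et type II) dans la construction inductive de \cite{chapoton_SLC}. Ce n'est qu'après spécialisations $v=1$, $v=u$ et élimination de $\Phi(u,1)$ qu'on obtient une équation à une seule variable catalytique, à laquelle la méthode de \cite{BM_Jehanne} donne l'équation de degré $7$ en $\Phi(1,1)$ dont on peut vérifier la symétrie. Le plan pointe dans la bonne direction, mais la totalité du travail — trouver la bonne décomposition, les deux variables catalytiques, suivre $x$ correctement, puis mener l'élimination — reste à faire.
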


Les deux énoncés $(A)$ et $(B)$ sont équivalents par l'invariance
connue sous l'involution (provenant de l'auto-dualité de $\Tam_n$) qui
échange $x$ avec $\xb$ ainsi que $y$ avec $\yb$. Il n'y a aucune
raison \textit{a priori} d'attendre une symétrie totale entre $x$, $y$
et $\yb$. \textit{A posteriori}, on peut soupçonner que la bijection
connue avec les triangulations \cite{bebo} pourrait expliquer cette symétrie
ternaire. La preuve de ce théorème passe par l'obtention d'une
équation algébrique pour la série génératrice des $\quat_n(x,y,\yb,1)$
dans la section \ref{algeb}.

Comme conséquence de ce théorème, les paires de variables
\begin{equation*}
  (x,y), (x,\yb),(y,\yb),(y,\xb), (\yb,\xb)
\end{equation*}
ont toutes la même double-distribution. Dans la section \ref{canopee}, on identifie cette double-distribution avec celle de deux paramètres décrivant les canopées des intervalles de Tamari. La double-distribution pour la
paire $(x,\xb)$ est très différente, presque diagonale.

\medskip

Une autre symétrie de $\quat_n$  reste à démontrer.
\begin{conjecture}
  \label{conjecture_x_xbar}
  Le polynôme $\quat_n$ est symétrique par échange de $x$ et $\xb$. Il
  est aussi symétrique par échange de $y$ et $\yb$.
\end{conjecture}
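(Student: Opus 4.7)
Par l'auto-dualité de $\Tam_n$, on dispose déjà de l'identité $\quat_n(x,y,\yb,\xb) = \quat_n(\xb,\yb,y,x)$, de sorte que les deux symétries conjecturées sont équivalentes : il suffit de prouver l'une d'elles. On se concentre donc sur $y \leftrightarrow \yb$. Remarquons que trois cas particuliers sont déjà établis : les spécialisations $\xb = 1$ et $x = 1$ par les deux parties du théorème \ref{principal}, et la spécialisation $x = \xb$ (pour laquelle l'auto-dualité elle-même entraîne l'échange $y \leftrightarrow \yb$).

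Posant $f(x,y,\yb,\xb) = \quat_n(x,y,\yb,\xb) - \quat_n(x,\yb,y,\xb)$, ce polynôme s'annule donc sur les trois hypersurfaces $\xb = 1$, $x = 1$ et $x = \xb$. Les facteurs linéaires correspondants étant deux à deux premiers entre eux dans $\mathbb{Z}[x,y,\yb,\xb]$, on en déduit la factorisation
\begin{equation*}
f(x,y,\yb,\xb) = (x-1)\,(\xb-1)\,(x-\xb)\, g(x,y,\yb,\xb),
\end{equation*}
et la conjecture équivaut à $g \equiv 0$.

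Deux approches me semblent alors envisageables pour conclure. La première, bijective, consisterait à construire une involution sur les intervalles de $\Tam_n$ préservant $x$ et $\xb$ tout en échangeant les valences extérieures $y$ et $\yb$ (qui ne dépendent respectivement que de $v$ et de $u$). La bijection entre intervalles de Tamari et triangulations de \cite{bebo}, déjà évoquée par l'auteur comme candidate pour expliquer la symétrie ternaire du théorème \ref{principal}, ou l'une des bijections plus récentes avec des cartes planaires non séparables, fournirait un cadre où une telle involution pourrait se lire géométriquement. La seconde approche, algébrique, étendrait la méthode de la section \ref{algeb} pour obtenir une équation fonctionnelle complète en les quatre variables, en introduisant les variables catalytiques nécessaires pour contrôler simultanément la structure locale en $u$ et en $v$.

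L'obstacle principal est le suivant : toute décomposition récursive naturelle des intervalles de Tamari couple asymétriquement les statistiques $(y,\xb)$ avec $(\yb,x)$, si bien qu'aucune symétrie locale n'apparaît dans les équations. La symétrie conjecturée est donc authentiquement globale, et la réduction à l'annulation du facteur résiduel $g$, bien qu'encourageante, ne suffit pas à conclure : il reste à identifier la structure combinatoire ou algébrique supplémentaire — vraisemblablement une involution naturelle sur une famille d'objets équipotente aux intervalles — qui force cette annulation.
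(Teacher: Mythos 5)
This statement is a \emph{conjecture} in the paper, not a theorem: the author explicitly states that it has been verified experimentally only up to $n=8$ and that it escapes the catalytic-equation techniques developed in the paper because no functional equation tracking all four variables $x,y,\yb,\xb$ is available. There is therefore no ``paper's own proof'' to compare against, and indeed what you have written is not a proof either --- you acknowledge this yourself in your final paragraph. So the honest assessment is that you have not proved the conjecture, and you were right not to pretend otherwise.

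That said, your intermediate observation is correct and, as far as I can tell, not in the paper. Writing $f = \quat_n(x,y,\yb,\xb) - \quat_n(x,\yb,y,\xb)$, the three vanishing conditions you cite are all valid: $\xb=1$ kills $f$ by part (A) of Théorème \ref{principal}, $x=1$ kills $f$ by part (B), and $x=\xb$ kills $f$ by the auto-duality identity $\quat_n(x,y,\yb,\xb)=\quat_n(\xb,\yb,y,x)$. Since $x-1$, $\xb-1$, $x-\xb$ are pairwise coprime irreducibles in $\mathbb{Z}[x,y,\yb,\xb]$, the factorization $f=(x-1)(\xb-1)(x-\xb)\,g$ follows, which is a genuine reduction: it sharpens the experimental evidence and gives a cleaner target ($g\equiv 0$). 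But the remaining step is exactly where the difficulty lies, and neither of your two suggested routes (a bijective involution, or a four-variable catalytic system) is carried out; the paper itself remarks that a naive involution cannot work, since a computation at $n=6$ shows it could not preserve the length of maximal chains. In short: a sound and useful partial observation, correctly flagged as incomplete, for a statement that remains open in the paper.
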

Les deux parties de l'énoncé sont équivalentes du fait de l'invariance
connue sous l'échange simultané de $x \leftrightarrow \xb$ et
$y \leftrightarrow \yb$. Cet énoncé a été vérifié expérimentalement
jusqu'à $n=8$ inclus. Il échappe aux techniques de preuves utilisées
ici, car on ne dispose pas d'équations catalytiques pour la série
génératrice tenant compte des $4$ variables $x,y,\yb,\xb$.  Il semble
tentant de chercher un involution pour démontrer cette conjecture. Un
calcul pour $n=6$ montre qu'une telle involution ne peut pas préserver
la longueur des chaînes maximales dans les intervalles.


\medskip

Un sous-ensemble remarquable des intervalles de $\Tam_n$ est formé par
les intervalles synchrones, qui sont ceux où la canopée du minimum est
égale à la canopée du maximum. Cet ensemble a le même cardinal
que les permutations triables-par-deux-piles dans le groupe symétrique $\mathfrak{S}_n$
\cite{Fang_PR} (voir \href{http://oeis.org/A000139}{A139}).

\begin{theorem}
  \label{synchrone}
  Les intervalles synchrones dans $\Tam_n$ sont exactement les
  intervalles de $\Tam_n$ de degré $n-1$ en $(y,\yb)$.
\end{theorem}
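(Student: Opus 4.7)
L'idée est de traduire la condition de degré en une égalité entre degrés entrants et sortants dans $\Tam_n$, puis de la relier à la canopée par une analyse locale d'une rotation.

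D'abord, je déballerais les définitions. Pour un intervalle $(u,v)$, le degré en $y$ est égal au degré sortant de $v$ dans $\Tam_n$, noté $r(v)$, car toute couverture supérieure de $v$ produit une couverture de type $y$ de $(u,v)$ ; et le degré en $\yb$ est égal au degré entrant de $u$, noté $\ell(u)$, car toute couverture inférieure de $u$ reste automatiquement $\leq v$. Il s'agit donc de montrer que $(u,v)$ est synchrone si et seulement si $r(v) + \ell(u) = n-1$.

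Ensuite, j'observerais l'identité $r(t) + \ell(t) = n-1$ valable pour tout arbre binaire $t$ à $n$ sommets internes : $r(t)$ compte les sommets internes de $t$ ayant un fils droit interne et $\ell(t)$ ceux ayant un fils gauche interne, donc $r(t) + \ell(t)$ est le nombre d'arêtes de $t$ entre deux sommets internes, soit $n-1$. Appliquée à $v$, cette identité ramène la condition voulue à $r(u) = r(v)$. Je relierais alors $r(t)$ à la canopée par un décompte direct : une lettre $L$ correspond à une feuille fils gauche, donc à un sommet interne dont le fils gauche est une feuille, et il y en a $n - \ell(t) = r(t) + 1$. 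Ainsi $r(u) = r(v)$ équivaut à l'égalité des nombres de $L$ dans les canopées de $u$ et $v$.

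Le cœur de la preuve est une étude locale d'une couverture $t \couvert t'$, obtenue par rotation à un sommet $p$ de fils droit interne $q$. En notant $A$ le sous-arbre gauche de $p$ et $B$, $C$ les sous-arbres gauche et droit de $q$, la rotation ne modifie que le parent de la racine de $B$ : si $B$ est une feuille isolée, sa lettre dans la canopée bascule de $L$ à $R$, sinon la canopée reste inchangée. Le long d'une chaîne saturée de $u$ à $v$, le nombre de $L$ décroît donc faiblement, avec égalité si et seulement si la canopée est préservée à chaque pas. Combiné à la réduction précédente, $r(u) = r(v)$ équivaut à la coïncidence des canopées, c'est-à-dire à la synchronicité recherchée. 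Le point délicat est cette analyse locale : il faut vérifier que seule la racine de $B$ change de parent sous la rotation, de sorte qu'aucune autre lettre de la canopée ne puisse être affectée.
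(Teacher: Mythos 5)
Your proof is correct, and it takes a genuinely different route from the paper's. The paper's argument is analytic: it first argues (using the functional equations \eqref{eq_Phi} and \eqref{eq_Theta}) that restricting to degree $n-1$ in $(y,\yb)$ excludes precisely the inductive step that can break synchronicity, giving one inclusion; it then closes the gap by deriving a catalytic equation for the restricted series, computing its algebraic equation, and matching it against the known equation for the common generating series of synchronous intervals and two-stack-sortable permutations. Your argument is purely combinatorial and self-contained: you observe that the $(y,\yb)$-degree of $(u,v)$ equals $\operatorname{out}(v)+\operatorname{in}(u)$, use the $(n-1)$-regularity of the associahedron graph to rewrite the condition as $\operatorname{out}(u)=\operatorname{out}(v)$, translate this into equality of the number of $L$'s in the two canopies, and then use the local analysis of one rotation (only the root of $B$ changes parent, so at most one canopy letter changes, always from $L$ to $R$) to conclude that along any saturated chain the $L$-count is weakly decreasing, hence equal endpoints force canopy-preservation at each step. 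This gives both inclusions at once, without generating functions or knowledge of the A000139 formula. The trade-off is that the paper's route simultaneously re-derives the enumerative link to two-stack-sortable permutations, while yours gives a cleaner structural explanation and in fact shows something slightly stronger: on a $(y,\yb)$-degree-$(n-1)$ interval, the canopy is constant at \emph{every} element, not merely equal at the two endpoints. Both are valid; yours is the kind of bijective-flavoured argument the author says would be desirable elsewhere in the paper.
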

La preuve sera donnée dans la section \ref{preuve_synchrone}.

Par l'égalité des double-distributions mentionnée plus haut, on a donc
au total $5$ sous-ensembles d'intervalles de $\Tam_n$ de même cardinal que les permutations
triables-par-deux-piles, lorsque le degré en
$(x,y),(x,\yb),(y,\yb),(y,\xb)$ ou $(\yb,\xb)$ est $n-1$.

La distribution induite, par exemple celle des variables $(x,y)$ sur
les intervalles de degré $n-1$ en $(x,y)$ semble être identique à celle des
permutations triables-par-deux-piles selon une statistique connue (voir \href{http://oeis.org/A082680}{A82680}).

\section{Séries génératrices}

\label{series_gens}

On va chercher une équation fonctionnelle pour la série génératrice
des $\quat_n(x, y, \yb,1)$, puis montrer que ceci implique que cette série
génératrice est algébrique et enfin observer la symétrie ternaire sur l'équation
algébrique obtenue.

\subsection{Description combinatoire}

On va utiliser comme série génératrice une série formelle en la
variable $t$ (qui compte le nombre de sommets dans les arbres binaires
plans) à coefficients dans $\QQ[u,v,x,y,\yb]$. Ici $u$ et $v$ sont 2
variables catalytiques (auxiliaires), qui sont utiles (et même
nécessaires) pour écrire les équations fonctionnelles.

Tout arbre binaire $S$ dans $\Tam_n$ admet une unique décomposition
par découpage le long de son bord gauche. Plus précisément, $S$ admet
une unique décomposition maximale de la forme
$S_0 / S_1 / \dots / S_\ell$, où l'opération associative $S_i / S_j$ est
la greffe de la racine de $S_i$ sur la feuille la plus à gauche de $S_j$.
On dit que $S$ est \textit{indécomposable} lorsque cette décomposition
est triviale ($\ell=0$).

Soit $S$ un arbre binaire dans $\Tam_n$ avec une décomposition
maximale comme ci-dessus, de la forme $S_0 / S_1 / \dots / S_\ell$. On
peut lui associer une composition $\comp(S)$ qui est la liste
$(n_0,n_1,\dots,n_\ell)$ des nombres de sommets des arbres binaires
$S_i$. Pour l'arbre dessiné au début de la section
\ref{section-tamari}, la composition $\comp(S)$ est $(4,2,3)$.

Si $S \leq T$ dans $\Tam_n$, la composition $\comp(S)$ est plus
grossière que la composition $\comp(T)$. Il suffit de le voir pour une
couverture $S \couvert T$ dans $\Tam_n$ ; dans ce cas la propriété découle de la
description des couvertures par rotation $\drt \to \gch$ des arbres binaires.

En particulier, soit $k_S$ le dernier nombre dans la composition
$\comp(S)$. C'est le nombre de sommets du dernier terme (celui qui
contient la racine) dans la décomposition par découpage le long du bord
gauche de $S$.

Il existe alors une unique partition de l'ensemble des arêtes du bord
gauche de $T$ en une partie $U$ (en bas) et une partie $V$ (en haut)
telle que l'arbre (contenant la racine) obtenu en coupant au milieu de
l'arête inférieure de $V$ ait exactement $k_S$ sommets.

\begin{center}
  \label{arbre}
  \includegraphics[height=2cm]{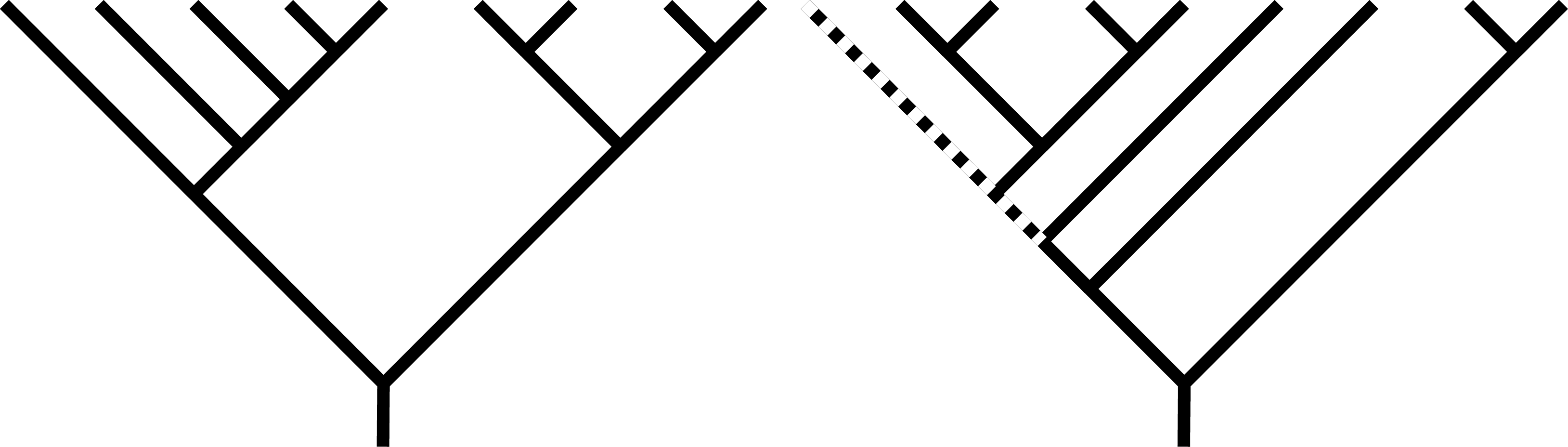}
\end{center}
Voici une illustration de cette description avec $S$ à gauche de
composition $\comp(S)=(4,4)$ et $T$ à droite de composition
$\comp(T)=(4,1,1,2)$, et la partie $V$ du bord de $T$ en pointillés.

On va utiliser la variable $u$ pour tenir compte de la taille de $U$
et la variable $v$ pour tenir compte de la taille de $V$.

\medskip

On va maintenant se servir de la même description récursive des intervalles de
$\Tam_n$ que dans \cite{chapoton_SLC}, auquel le lecteur pourra se reporter pour
davantage de détails.

On dit qu'un intervalle $(S,T)$ est \textit{indécomposable} lorsque
$S$ est indécomposable. Tout intervalle admet une unique écriture sous
la forme 
\begin{equation*}
(S_0/S_1/\dots/S_{\ell}, T_0/T_1/\dots/T_{\ell})
\end{equation*}
où $(S_i,T_i)$ est un intervalle indécomposable pour tout $i$. Ceci
résulte du fait que la composition $\comp(S)$ est plus grossière que
$\comp(T)$. Dans cette situation, l'intervalle $(S,T)$ est isomorphe au produit cartésien des intervalles $(S_i,T_i)$.

Tout intervalle indécomposable $(S,T)$ dans $\Tam_n$ pour $n \geq 2$
s'obtient de manière unique à partir d'un intervalle $(S',T')$ dans
$\Tam_{n-1}$ par la procédure suivante. L'arbre $S$ s'obtient en
ajoutant à $S'$ une feuille qui vient s'attacher à gauche sur l'arête
située sous la racine de $S'$. On note le résultat $Y \backslash
S'$. L'arbre $T$ s'obtient en ajoutant à $T'$ une feuille qui vient
s'attacher à gauche sur une des arêtes du bord gauche de $T'$. On note
$Y * T'$ l'ensemble des arbres obtenus ainsi. Si $a$ est une arête du
bord de $T'$, one note $Y *_a T'$ l'arbre obtenu en ajoutant à gauche
une feuille sur l'arête $a$. Le nombre d'intervalles $(S,T)$ ainsi
construits à partir d'un intervalle $(S',T')$ fixé est donc égal au
nombre d'arêtes du bord gauche de $T'$.

Ces deux décompositions donnent une description récursive complète des
intervalles de Tamari. Il reste à comprendre le comportement des
paramètres qui nous intéressent dans ces décompositions.

\medskip

On utilise deux séries génératrices :
\begin{itemize}
\item la série $\Theta$ pour les intervalles indécomposables,

\item la série $\Phi$ pour tous les intervalles.
\end{itemize}

On va utiliser la description alternative suivante des statistiques
contrôlées par les variables $x,y,\yb$, qui résulte directement de la
description des couvertures dans $\Tam_n$ par la rotation des arbres
binaires plans. Pour un intervalle $(S,T)$, la puissance de de $y$ est
le nombre d'arêtes internes de $T$ de direction $/$ et celle de $\yb$
est le nombre d'arêtes internes de $S$ de direction $\backslash$. La
puissance de $x$ est plus subtile, bornée par le nombre d'arêtes internes
de $S$ de direction $/$, mais sous la condition que la rotation
de $S$ associée à cette arête donne un arbre inférieur à $T$.

\begin{proposition}
On a les équations fonctionnelles :
\begin{equation}
  \label{eq_Phi}
    \Phi(u,v) = \Theta(u,v) + \yb \Phi(v,v) \Theta(u,v) / v,
\end{equation}
et
\begin{multline}
  \label{eq_Theta}
  \Theta(u,v) = t v \Big{(}u +
                  y u \frac{\Phi(u,1)-\Phi(1,1)}{u - 1} + \\
                  x y u \frac{\Phi(u,u)-\Phi(u,1)}{u - 1} +
                  (x - x y) \Phi(u,u) \Big{)}.
\end{multline}
\end{proposition}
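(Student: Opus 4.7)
Les deux équations traduisent les deux décompositions récursives décrites juste avant l'énoncé : la décomposition en produit selon les facteurs indécomposables pour~\eqref{eq_Phi}, et la construction inductive $\Tam_{n-1} \to \Tam_n$ des intervalles indécomposables pour~\eqref{eq_Theta}. L'enjeu consistera dans chaque cas à suivre l'évolution des statistiques $t$, $x$, $y$, $\yb$ ainsi que des variables catalytiques $u, v$ à travers la récursion.

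\textit{Équation~\eqref{eq_Phi}.} On sépare les intervalles en indécomposables (contribuant par définition $\Theta(u,v)$) et décomposables. Dans le second cas, on isole le facteur indécomposable du bas $(S_\ell, T_\ell)$, de sorte que $(S, T) \cong (S', T') \times (S_\ell, T_\ell)$ avec $(S', T')$ un intervalle arbitraire de taille inférieure. La greffe $S'/S_\ell$ transforme l'ancienne arête interne-feuille au sommet du bord gauche de $S_\ell$ en une arête interne-interne de direction $\backslash$, créant ainsi une nouvelle arête dans $S$ qui explique le facteur $\yb$. Pour les variables catalytiques, le sommet partagé entre $T_\ell$ et $T'$ (la feuille la plus à gauche de $T_\ell$ qui devient la racine de $T'$) se trouve compté à la fois dans le $|V|$ du facteur $\Theta(u,v)$ et dans le $|U|$ du facteur $\Phi(v,v)$ (toutes les arêtes du bord gauche de $T'$ se trouvant au-dessus de la coupure, on substitue $u \mapsto v$). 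La division par $v$ corrige précisément ce double comptage.

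\textit{Équation~\eqref{eq_Theta}.} On utilise la bijection entre les intervalles indécomposables de $\Tam_n$ (pour $n\geq 2$) et les couples $((S',T'),a)$ où $(S',T') \in \Tam_{n-1}$ et $a$ est une arête du bord gauche de $T'$, avec $S = Y \backslash S'$ et $T = Y *_a T'$. Le facteur $tv$ global rend compte du nouveau sommet interne ($t$) et de l'arête au sommet du bord gauche de $T$ (qui fournit le $v$). Le terme $u$ à l'intérieur des parenthèses correspond au cas initial $n=1$. Les trois autres termes s'obtiennent en scindant la somme sur $a$ selon sa position dans le bord gauche de $T'$ relativement à la coupure de $T'$ : les sommations géométriques sur les positions de $a$ dans $U$ (respectivement dans $V$ sauf l'arête supérieure) donnent les quotients $(\Phi(u,1)-\Phi(1,1))/(u-1)$ et $(\Phi(u,u)-\Phi(u,1))/(u-1)$, et les coefficients $yu$, $xyu$, $(x-xy)$ reflètent les changements précis en $y$ (nouvelle arête interne $/$ dans $T$ sauf si $a$ est l'arête-feuille supérieure) et en $x$ (la nouvelle rotation à la racine de $S$ reste ou non compatible avec $T$ selon $a$).

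\textit{Principale difficulté.} Le contrôle de la statistique $x$ sera le point le plus délicat : sa définition n'est pas locale mais dépend globalement de l'ordre de Tamari via la condition que la rotation associée produise un arbre $\leq T$. Il faudra vérifier, pour chaque position de $a$, quelles nouvelles arêtes internes $/$ de $S = Y \backslash S'$ (en particulier la nouvelle arête à la racine de $S$) correspondent à des rotations restant sous $T = Y *_a T'$ ; c'est cette analyse qui produira le terme particulier $(x-xy)\Phi(u,u)$, correspondant au cas limite où $a$ est l'arête interne-feuille supérieure du bord gauche de $T'$.
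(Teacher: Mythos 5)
Votre plan suit exactement la même stratégie que le papier : découpage le long du bord gauche avec facteur indécomposable en bas pour~\eqref{eq_Phi}, puis construction inductive $(S',T',a)\mapsto(S,T)$ pour~\eqref{eq_Theta}, avec analyse des nouveaux bords internes, du rôle de $v$, et de la distinction $a\in U$ versus $a\in V$. La substitution $u\mapsto v$ pour le facteur du haut et la division par $v$ sont aussi correctement motivées, même si l'auteur les explique en termes de « suppression d'une arête du bord gauche » lors de la greffe plutôt que de double comptage d'un sommet — c'est une différence cosmétique.

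Le point qui manque est précisément celui que vous identifiez vous-même comme la « principale difficulté » et que vous renvoyez à un « il faudra vérifier » : la preuve ne peut pas s'arrêter là, et c'est justement ce point qui constitue l'essentiel de l'argument du papier. Pour contrôler la statistique $x$, l'auteur établit une description explicite de l'intervalle $(S,T)$ avec $S=Y\backslash S'$ et $T=Y*_a T'$ : ses éléments sont les $R=Y*_b R'$ pour $R'\in(S',T')$ et $b$ une arête du bord gauche de $R'$ avec $k_b(R')\leq k_a(T')$, et ses couvertures se répartissent en deux types — type~I (déplacement de $b$ d'un cran vers le haut) et type~II (héritée d'une couverture $R'\couvert R''$, via l'unique arête $c$ du bord gauche de $R''$ avec $k_c(R'')=k_b(R')$). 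C'est cette description qui permet d'identifier précisément quand la nouvelle rotation à la racine de $S$ reste sous $T$ : cela arrive si et seulement si $k(S')\leq k_a(T')$, c'est-à-dire si et seulement si $a$ est dans la partie $V$ du bord de $T'$. Sans ce lemme de structure, la répartition des coefficients $yu$, $xyu$ et $(x-xy)$ entre les trois derniers termes n'est pas justifiée ; votre plan décrit correctement le résultat attendu, mais ne contient pas l'argument qui le démontre. Notons aussi un léger flou dans l'attribution du terme correctif $(x-xy)\Phi(u,u)$ : dans le papier il provient d'abord de la correction du facteur $y$ (pas de nouvelle arête $/$ dans $T$ lorsque $a$ est l'arête supérieure, donc $Y/T'$), et non principalement de l'analyse du facteur $x$.
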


\begin{proof}
  La première équation \eqref{eq_Phi} s'obtient par découpage d'un
  intervalle $(S,T)$ le long du bord gauche. Un
  intervalle est soit indécomposable, soit de la forme
  $(S_0/S_1,T_0/T_1)$ avec $(S_0,T_0)$ un intervalle et $(S_1,T_1)$ un
  intervalle indécomposable.

  Lorsque l'intervalle $(S,T)$ est indécomposable, on a le premier
  terme du membre de droite de \eqref{eq_Phi}.

  Sinon, on ajoute une arête interne d'orientation $\backslash$ le
  long des bords gauches de $S$ et de $T$. On obtient donc un facteur
  $\yb$ supplémentaire, et il n'y a pas de facteur $x$ ou $y$ à
  ajouter. En ce qui concerne le découpage du bord gauche de $T$ en
  deux parties $U$ et $V$, il doit nécessairement se produire dans le
  facteur $T_1$. On doit aussi diviser par $v$ pour tenir compte de la
  greffe, qui supprime une arête du bord gauche. Ceci donne le second terme du membre de droite de
  \eqref{eq_Phi}.

  La preuve de la seconde équation \eqref{eq_Theta} est plus
  compliquée. Le premier terme $t u v$ correspond à l'unique intervalle dans
  $\Tam_1$, qui est bien indécomposable. Les trois autres termes
  correspondent à la distinction de plusieurs cas pour les intervalles indécomposables
  $(S,T)$ avec $S=Y \backslash S'$ et $T$ dans $Y * T'$. Il faut
  étudier les arêtes internes nouvelles dans $Y\backslash S'$ et $Y*T'$
  avec soin, en fonction de l'endroit où se greffe la nouvelle arête
  sur le bord gauche de $T'$.

  On peut d'abord comprendre le rôle de la variable $v$ en facteur. Comme
  $S$ est indécomposable, la composition $\comp_S$ a une seule
  part. Seule l'arête supérieure du bord gauche de $T$ est donc dans
  $V$, ce qui correspond au facteur $v$ dans le membre de droite de \eqref{eq_Theta}.

  La construction de $S$ et $T$ à partir de $S'$ et $T'$ ajoute des
  arêtes internes. Dans $S$, on obtient toujours une arête interne
  d'orientation $/$ en plus sous la racine de $S'$. Elle peut contribuer éventuellement à un
  facteur $x$ (voir ci-après), mais aucun facteur $\yb$
  n'intervient. Dans $T$, on ajoute en général une arête interne
  d'orientation $/$, sauf lorsque la nouvelle feuille vient se greffer
  tout en haut du bord gauche de $T'$ qui donne une arête interne
  d'orientation $\backslash$ en plus. On aura donc un facteur $y$ pour
  tous les arbres $T$ dans $Y * T'$ sauf pour l'arbre $Y / T'$. Cette
  distinction donne lieu à un terme correctif, le dernier terme dans le
  membre de droite de \eqref{eq_Theta}.

  Il reste à comprendre le comportement de la variable $x$. C'est le
  point le plus subtil. On se donne donc un intervalle indécomposable
  $(S,T)$ avec $S=Y \backslash S'$ et $T$ obtenu par ajout d'une
  feuille à $T'$, qui se greffe sur l'arête $a$ du bord gauche de
  $T'$. 

  Si $T$ est un arbre et $a$ une arête du bord gauche de $T$, on note
  $k_a(T)$ le nombre de sommets dans le sous-arbre de $T$ (contenant la racine de $T$) obtenu en
  coupant l'arête $a$.

  On a besoin de la description précise suivante de l'intervalle
  $(S,T)$ (implicite dans \cite[\S 4]{chapoton_SLC}). Les éléments de
  cet intervalle sont exactement les arbres $R = Y *_b R'$ obtenus à partir d'un
  élément $R'$ dans l'intervalle $(S',T')$ en ajoutant une feuille qui
  se greffe sur l'arête $b$ du bord gauche de $R'$, sous la
  condition que $k_b(R') \leq k_a(T')$. Les couvertures dans
  l'intervalle $(S,T)$ sont de deux types. Le premier type de
  couverture (type \rm{I}) est un changement de l'arête $b$ vers
  l'arête $b'$ située juste en dessus, si $k_{b'}(R') \leq k_a(T')$. Le
  second type de couvertures (type \rm{II}) provient d'une couverture
  $R' \couvert R''$ dans l'intervalle $(S',T')$. Une telle couverture
  peut modifier le coté gauche de $R'$, mais il existe toujours une unique
  arête $c$ du bord gauche de $R''$ telle que $k_{c}(R'') =
  k_b(R')$. Cette arête $c$ donne la couverture
  $(Y *_b R') \couvert (Y *_c R'')$ dans $(S,T)$.

  En utilisant cette description, on peut comprendre quelles
  couvertures de $S$ restent inférieures à $T$, parmi toutes les
  couvertures associées aux arêtes $\backslash$ de $S$. Ce sont d'une
  part des couvertures provenant directement des couvertures de $S'$
  qui restent inférieures à $T'$ (type \rm{II}) et d'autre part les
  couvertures de $S$ où la rotation se produit en la racine de $S$
  (type $I$). Si on effectue une telle rotation vers un arbre dans
  $(S,T)$, on voit que la valeur correspondante de $k_b(R')$ est égale
  au dernier terme $k(S')$ de la composition $\comp(S')$, ce qui est
  possible si est seulement si $k(S') \leq k_a(T')$.

  On voir donc qu'une nouvelle couverture de type $x$ apparaît (en
  plus des couvertures de ce type héritées de $(S',T')$) si et
  seulement si la construction de $T$ se fait en attachant une arête
  $a$ dans la partie $V$ du bord de $T'$. Cette distinction donne
  lieu aux deux termes avec division par $u-1$ dans le membre de
  droite de \eqref{eq_Theta}, qui correspondent aux cas $a \in U$ et
  $a \in V$.
\end{proof}

\begin{remark}
Si on spécialise le système \eqref{eq_Phi}, \eqref{eq_Theta} en $x=1$, on peut passer à un seul paramètre
catalytique (en identifiant $u$ et $v$, on retombe sur le paramètre
catalytique usuel pour les intervalles de Tamari). Par contre, si on
spécialise en $y=1$, on a quand même besoin de $u$ et $v$, mais c'est une
équation du même type un peu plus simple. On peut aussi spécialiser en $\yb=1$
sans aucune difficulté.
\end{remark}

\begin{remark}
À la place de \eqref{eq_Phi}, on peut utiliser
\begin{equation}
  \label{eq_Phi_alternative}
    \Phi(u,v) = \Theta(u,v) + \yb \Theta(v,v) \Phi(u,v) / v,
\end{equation}
qui se démontre facilement de manière combinatoire.
\end{remark}

\begin{remark}
En fait, on peut se débarrasser de la variable $v$ en faisant d'une part $v=1$ et d'autre part $v=u$ dans les équation \eqref{eq_Phi} et \eqref{eq_Theta}. Ça permet de se ramener à un système de deux équations en $\Phi(u,u)$, $\Phi(u,1)$ et $\Phi(1,1)$ dont
\begin{equation}
  (u + \yb \Phi(u,u)) \Phi(u,1) = \Phi(u,u) (1 + \yb \Phi(1,1)).
\end{equation}
Ceci sert de point de départ pour montrer l'algébricité, voir la section
\ref{algeb}.
\end{remark}


Les premiers termes de $\Phi(u,v)$ et $\Theta(u,v)$ sont :
\begin{align*}
\Phi(u,v) &= u vt + \left(u^{2} v x + u v^{2} \yb + u v y\right)t^{2} \\&+ \big{(}u^{3} v x^{2} + u^{3} v x \yb + u^{2} v^{2} x \yb + u v^{3} x \yb + u^{2} v x y \yb + u v^{3} \yb^{2} + 2 u^{2} v x y \\ & + 2 u v^{2} y \yb + u v x y + u v y^{2} + u v y \yb\big{)}t^{3} + O(t^{4})\\
\Theta(u,v) &= u vt + \left(u^{2} v x + u v y\right)t^{2} \\&+ \left(u^{3} v x^{2} + u^{3} v x \yb + u^{2} v x y \yb + 2 u^{2} v x y + u v x y + u v y^{2} + u v y \yb\right)t^{3} + O(t^{4})
\end{align*}
et en $(u,v)=(1,1)$ :
\begin{align*}
\Phi(1,1) &= t + \left(x + y + \yb\right)t^{2} + \left(x y \yb + x^{2} + 3 x y + y^{2} + 3 x \yb + 3 y \yb + \yb^{2}\right)t^{3} + O(t^{4})\\
\Theta(1,1) &= t + \left(x + y\right)t^{2} + \left(x y \yb + x^{2} + 3 x y + y^{2} + x \yb + y \yb\right)t^{3} + O(t^{4})
\end{align*}

On va montrer dans la section \ref{algeb} qu'une fois oubliés les
paramètres catalytiques $u$ et $v$, la série $\Phi(1,1)$ est totalement
symétrique en $x$, $y$ et $\yb$.

\subsection{$q$-analogue}

Un autre paramètre important sur les intervalles de Tamari est la
longueur de la plus grande chaîne entre le minimum et le maximum d'un
intervalle, c'est-à-dire le nombre maximal de rotations nécessaires
pour passer du minimum au maximum. Ce paramètre est aussi appelé le
nombre d'inversions de cet intervalle.

On peut facilement ajouter le paramètre $q$ pour la longueur de la
plus grande chaîne dans les équations \eqref{eq_Phi} et
\eqref{eq_Theta} avec $x,y,\yb,u,v$.

L'équation \eqref{eq_Phi} pour $\Phi$ reste inchangée, car la
puissance de $q$ est additive pour ce type de décomposition. Pour
l'équation \eqref{eq_Theta}, on trouve
\begin{multline}
  \label{eq_Theta_q}
  \Theta(u,v) = t v \Big{(}u +
                  y u \frac{\Phi(qu,1)-\Phi(1,1)}{qu - 1} + \\
                  x y u \frac{\Phi(qu,qu)-\Phi(qu,1)}{qu - 1} +
                  \frac{x - x y}{ q} \Phi(qu,qu) \Big{)}.
\end{multline}

Par auto-dualité du poset de Tamari, $q$ a la même double-distribution avec $\yb$ et $y$. La double-distribution avec $x$ ou $\xb$ est différente.

\subsection{Algébricité et symétrie}

\label{algeb}

On part du système suivant, forme équivalente au système de $4$ équations obtenues en faisant $v=1$ et $v=u$ dans \eqref{eq_Phi}, \eqref{eq_Theta}, 
suivi d'une élimination des $\Theta$ :

\begin{equation}
  \Phi_{uu} (1 + \yb \Phi_{11}) = \Phi_{u1} (u + \yb \Phi_{uu})
\end{equation}
\begin{equation}
  \Phi_{u1} = t (1 + \yb \Phi_{11}) \left(u+yu\frac{\Phi_{u1}-\Phi_{11}}{u-1}+xyu\frac{\Phi_{uu}-\Phi_{u1}}{u-1}+(x-xy)\Phi_{uu}\right)
\end{equation}
avec des notations abrégées pour les diverses spécialisations des
paramètres $u$ et $v$ dans la série $\Phi$.

On élimine alors $\Phi_{u1}$ puis on factorise, pour obtenir une
équation de type catalytique standard (avec une seule variable
catalytique) pour $\Phi_{uu}$  de la forme
$P(\Phi_{uu},\Phi_{11},t,u) = 0$ pour un certain polynôme $P$.

On écrit alors, en suivant \cite{BM_Jehanne}, le système de trois
équations algébriques
\begin{align*}
  P &= 0,\\
  \partial_{\Phi_{uu}}P&=0,\\
  \partial_u P&=0.
\end{align*}

Par élimination de $\Phi_{uu}$ et $u$, ce système nous donne une
équation algébrique de degré $7$ en $\Phi_{11}$. L'élimination directe
étant un peu difficile, on commence par éliminer $\Phi_{uu}$ pour
obtenir deux polynômes, puis on utilise leur résultant en $u$, qu'on
doit factoriser pour obtenir la bonne équation.

On obtient ainsi une (grosse) équation polynomiale, de degré $7$ en
$\Phi_{11}$ et de degré $6$ en $t$, comportant $1478$ monômes et des
coefficients compris entre $-628$ et $628$. Son polytope de Newton a
$109$ sommets. Le terme dominant en $\Phi_{11}$ a pour coefficient
\begin{equation}
  (x - 1) (y - 1) (\yb - 1) x^{5} y^{5} \yb^{5} t^{6}.
\end{equation}

On constate que cette équation est totalement symétrique en $x,y,\yb$
et qu'elle s'écrit sous la forme
\begin{equation}
  \Phi_{11} = t + \dots
\end{equation}
où les termes omis sont divisibles par $t$ et ont degré total au moins $2$ par rapport aux
variables $(t,\Phi_{11})$. Il s'ensuit que cette équation a une unique
solution qui est une série formelle en $t$, donc cette solution hérite
de la symétrie ternaire. Ceci démontre le théorème \ref{principal}. Il
reste la question de trouver des involutions qui démontrent la
symétrie de manière purement combinatoire.


\subsection{Intervalles synchrones}

\label{preuve_synchrone}

On donne ici la preuve du théorème \ref{synchrone}.

\begin{proof}
  D'abord, on a bien une inclusion des intervalles de degré
  $n-1$ en $y,\yb$ dans les intervalles synchrones. En effet, imposer
  la restriction sur le degré en $y,\yb$ revient, au vu des équations
  fonctionnelles \eqref{eq_Phi} et \eqref{eq_Theta}, à exclure le
  terme $x \Phi(u,u)$ dans \eqref{eq_Theta}. Au niveau combinatoire,
  ceci correspond à exclure exactement le seul cas, dans la
  construction inductive des intervalles, où on peut sortir des
  intervalles synchrones : lorsqu'on rajoute sur le bord gauche du
  maximum une feuille tout au sommet.

  Pour montrer une égalité des cardinaux, il suffit de résoudre
  les équations fonctionnelles restreintes. Après simplification en
  posant $x=1$, $y=1$, $\yb=1$ et $v=u$, celles-ci sont
  \begin{equation*}
    \Phi'(u,u) = \Theta'(u,u) + \Phi'(u,u) \Theta'(u,u) / u,
  \end{equation*}
  et
  \begin{equation*}
    \Theta'(u,u) = t u \Big{(}u +
    u \frac{\Phi'(u,u)-\Phi'(1,1)}{u - 1}
    - \Phi'(u,u) \Big{)}.
  \end{equation*}
  On en déduit par les méthodes de \cite{BM_Jehanne} l'équation algébrique
  \begin{equation*}
    \Phi_{11}^{'3} t^{2} + 6 \Phi_{11}^{'2} t^{2} + 2 \Phi_{11}^{'2} t + 12 \Phi'_{11} t^{2} - 10 \Phi'_{11} t + 8 t^{2} + \Phi'_{11} -  t,
  \end{equation*}
  qu'il est facile de comparer avec l'équation connue pour la
  série génératrice commune des permutations triables-par-deux-piles et des intervalles
  synchrones (voir \href{http://oeis.org/A000139}{A139}).
\end{proof}

\section{Diverses propriétés}

On regroupe ici diverses propriétés des intervalles en général et des
intervalles dans $\Tam_n$ en particulier, ainsi que quelques questions ouvertes.

\begin{lemma}
  Soit $P$ un poset fini. Un intervalle $(u,v)$ est de degré $0$ en
  $x$ ou en $\xb$ si et seulement si cet intervalle est réduit à un
  point, \textit{i.e.} $u=v$.
\end{lemma}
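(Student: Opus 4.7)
The plan is to prove both directions directly from the definitions of the four variables, using only the fact that in a finite poset any strict inequality $u < v$ can be refined to a saturated chain.

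First the easy direction: assume $u=v$. The degree in $x$ counts covers $u \couvert u'$ in $P$ with $u' \leq v$. With $v=u$ this would force $u \couvert u'$ and $u' \leq u$ simultaneously, contradicting $u < u'$. Hence the degree in $x$ is $0$. Symmetrically, the degree in $\xb$ counts elements $v' \couvert v$ with $u \leq v'$, which with $u=v$ forces $v' < u$ and $u \leq v'$, again impossible. So a point-interval has degree $0$ in both $x$ and $\xb$.

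For the converse, suppose $u < v$ strictly. Since $P$ is finite, there exists a saturated chain $u = w_0 \couvert w_1 \couvert \cdots \couvert w_k = v$ with $k \geq 1$. Then $w_1$ is a cover of $u$ with $w_1 \leq v$, so $w_1$ contributes to the degree in $x$, which is therefore at least $1$. Similarly $w_{k-1}$ satisfies $w_{k-1} \couvert v$ and $u \leq w_{k-1}$, so it contributes to the degree in $\xb$, which is likewise at least $1$. Contrapositively, degree $0$ in either $x$ or $\xb$ forces $u=v$.

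There is no real obstacle here: the lemma is essentially a direct unpacking of the four definitions, and the only ingredient beyond that is the existence of a saturated refinement of any relation $u < v$ in a finite poset. Both statements (for $x$ and for $\xb$) are perfectly dual to one another under the involution $P \leftrightarrow P^\star$ which swaps $(x,y)$ with $(\xb,\yb)$, so one could equally well prove only the $x$ case and deduce the $\xb$ case formally.
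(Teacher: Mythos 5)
Your proof is correct and follows exactly the same line as the paper: the forward direction is immediate from the definitions, and the converse uses a saturated chain from $u$ to $v$ whose first and last covers furnish an $x$-edge and an $\xb$-edge respectively. You simply spell out in more detail what the paper compresses into one sentence.
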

\begin{proof}
  Si l'intervalle réduit à un élément, il n'existe clairement aucune
  arête de type $x$ ou $\xb$ dans le diagramme de
  Hasse. Réciproquement, si l'intervalle n'est pas réduit à un point, on
  peut trouver un chemin de $u$ à $v$, donc des couvertures de type
  $x$ et $\xb$.
\end{proof}

\begin{lemma}
  Soit $P$ un poset fini. Un intervalle $(u,v)$ est de degré $0$ en
  $y$ (resp. en $\yb$) si et seulement si $v$ est un élément maximal du poset $P$ (resp. $u$ un élément minimal)
\end{lemma}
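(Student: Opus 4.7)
The plan is to unfold the definition of $\quat_P$ and verify that for the variables $y$ and $\yb$ no side condition intervenes, so that the degree is controlled entirely by the local cover structure at one endpoint of the interval.

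First I would handle the $y$ direction. By definition, a factor of $y$ in the monomial associated to $(u,v)$ records an outgoing cover $(u,v)\couvert(u,v')$ in $\Int(P)$ with $v\couvert v'$ in $P$. Since the first coordinate is frozen at $u$ and $u\leq v\leq v'$ holds automatically, the collection of such outgoing $y$-edges is in canonical bijection with the set of covers of $v$ in $P$. Hence the $y$-degree of $(u,v)$ equals the number of elements covering $v$ in $P$, and this number is zero if and only if $v$ is maximal.

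Then I would apply the symmetric (dual) argument to $\yb$. A factor of $\yb$ in the monomial of $(u,v)$ comes from an incoming cover $(u',v)\couvert(u,v)$ with $u'\couvert u$ in $P$, again with no further constraint since $u'\leq u\leq v$ is automatic. So the $\yb$-degree of $(u,v)$ equals the number of elements of $P$ covered by $u$, which vanishes exactly when $u$ is minimal in $P$.

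There is essentially no obstacle: the statement is a direct reading of the definitions. What makes this characterization so clean (while the analogue for $x$ or $\xb$ in the previous lemma requires an interval of nonzero length) is precisely that the $y$ and $\yb$ covers take place at the top and bottom of the interval, unconstrained by the opposite endpoint, whereas the definitions of $x$ and $\xb$ carry the extra conditions $u'\leq v$ and $u\leq v'$ that can be ruined by maximality or minimality in $P$.
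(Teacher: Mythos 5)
Your argument is correct and matches the paper's proof, which likewise observes that the $y$-degree of $(u,v)$ is simply the number of covers of $v$ in $P$ (and dually for $\yb$ and $u$), so vanishing is equivalent to maximality (resp. minimality). Your closing remark contrasting this with the constrained $x$/$\xb$ covers is a helpful aside but not part of the argument itself.
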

\begin{proof}
  C'est immédiat, le degré en $y$ étant simplement le nombre d'arêtes
  sortantes de $v$ dans le diagramme de Hasse de $P$. Idem pour le
  degré en $\yb$ et les arêtes entrantes en $u$.
\end{proof}

\begin{lemma}
  Pour le treillis de Tamari $\Tam_n$, le degré de $\quat_n$ par
  rapport à $(x,\yb)$ ou $(y,\xb)$ est au plus $n-1$. C'est aussi vrai pour le degré par rapport à $(x,y)$, $(\xb,\yb)$ et $(y,\yb)$. Le degré de
  $\quat_n$ par rapport à chacune des variables $x,y,\yb,\xb$ est au
  plus $n-1$. 
\end{lemma}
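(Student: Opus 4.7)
Je sépare l'énoncé en plusieurs bornes. Notons pour un arbre $R \in \Tam_n$ les nombres $p(R)$ et $m(R)$ d'arêtes internes d'orientation $/$ et $\backslash$, de sorte que $p(R)+m(R)=n-1$. La description combinatoire donnée dans la section \ref{series_gens} se lit alors comme $\deg_y(S,T) = p(T)$, $\deg_{\yb}(S,T)= m(S)$, $\deg_x(S,T) \leq p(S)$ et, par dualité, $\deg_{\xb}(S,T) \leq m(T)$. De plus, chaque couverture $\drt \to \gch$ du treillis de Tamari transforme une arête $/$ en une arête $\backslash$, donc $p$ décroît et $m$ croît le long des chaînes; en particulier $S \leq T$ entraîne $p(S) \geq p(T)$ et $m(S) \leq m(T)$.

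Ces observations règlent les bornes faciles. Les bornes individuelles viennent immédiatement de $p,m \leq n-1$. Pour la paire $(x,\yb)$, on a $\deg_x+\deg_{\yb} \leq p(S)+m(S)=n-1$, et la borne pour $(y,\xb)$ est symétrique par auto-dualité. Pour $(y,\yb)$, la monotonie donne $\deg_y+\deg_{\yb} = p(T)+m(S) \leq p(T)+m(T) = n-1$. Enfin, par l'involution connue échangeant $x$ avec $\xb$ et $y$ avec $\yb$, la borne pour $(\xb,\yb)$ se déduit de celle pour $(x,y)$.

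Il reste à établir $\deg_x+\deg_y \leq n-1$, et je le ferais par récurrence sur $n$. Pour un intervalle décomposable à $\ell+1 \geq 2$ facteurs indécomposables $(S_i,T_i)$ de tailles $n_i$, la multiplicativité de $\quat$ et l'hypothèse de récurrence donnent $\deg_x+\deg_y \leq \sum_i (n_i-1) = n-1-\ell \leq n-2$. Pour un intervalle indécomposable $(S,T) \in \Tam_n$ construit à partir de $(S',T') \in \Tam_{n-1}$ par une arête $a$ du bord gauche de $T'$, l'analyse présentée dans la preuve de l'équation \eqref{eq_Theta} se traduit par $\deg_x(S,T) = \deg_x(S',T') + [a \in V]$ et $\deg_y(S,T) = \deg_y(S',T') + [a \neq e_L]$, où $e_L$ désigne l'arête sommitale (feuille) du bord gauche. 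L'accroissement total en $\deg_x+\deg_y$ vaut donc $1$ si $a=e_L$ ou $a \in U$, mais $2$ si $a \in V \setminus \{e_L\}$.

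L'obstacle principal est ce cas d'accroissement $2$. Sa résolution repose sur le fait qu'il nécessite $V \setminus \{e_L\} \neq \emptyset$, c'est-à-dire $k(S') < n-1$, ce qui équivaut à la décomposabilité de $S'$, et donc de $(S',T')$. Dans cette situation, la récurrence appliquée aux $\ell'+1 \geq 2$ facteurs de $(S',T')$ fournit la borne renforcée $\deg_x(S',T')+\deg_y(S',T') \leq n-2-\ell'$, compensant exactement l'accroissement $2$ et donnant $\deg_x(S,T)+\deg_y(S,T) \leq n-\ell' \leq n-1$. Dans les autres cas (accroissement $\leq 1$), la borne standard $n-2$ de la récurrence suffit, ce qui achève l'induction.
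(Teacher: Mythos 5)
Your proof is correct but takes a genuinely different route from the paper's. The paper dispatches the lemma in three lines: the bound for $(x,\yb)$ and $(y,\xb)$ is read off from the fact that the Hasse diagram of $\Tam_n$ is the $1$-skeleton of the $(n-1)$-dimensional associahedron, a \emph{simple} polytope (every vertex has valence $n-1$), and the bounds for $(x,y)$, $(\xb,\yb)$ and $(y,\yb)$ are then deduced by invoking the ternary symmetry of Théorème~\ref{principal} (which itself relies on the algebraic equation of section~\ref{algeb}). You instead make the valence argument concrete through $p(R)+m(R)=n-1$, add a neat direct proof of the $(y,\yb)$ bound via the monotonicity of $p$ and $m$ along chains, and --- this is the substantial difference --- prove the $(x,y)$ bound by induction on the recursive construction of intervals, using the increment formulas $\deg_x(S,T)=\deg_x(S',T')+[a\in V']$ and $\deg_y(S,T)=\deg_y(S',T')+[a\neq e_L]$ extracted from the proof of \eqref{eq_Theta}. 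The key observation that the bad case (increment $2$) forces $V'\setminus\{e_L\}\neq\emptyset$, hence $k(S')<n-1$, hence decomposability of $(S',T')$, is exactly what makes the induction close, since decomposability yields the strengthened bound $n-2-\ell'$. Your route is thus logically lighter: it does not depend on Théorème~\ref{principal} or on any generating-function algebra, only on the combinatorial decomposition already needed to set up \eqref{eq_Phi}--\eqref{eq_Theta}, at the cost of a longer inductive argument where the paper has a one-line deduction.
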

\begin{proof}
  Le premier point résulte du fait bien connu que le diagramme de
  Hasse de
  $\Tam_n$ est le graphe des sommets et des arêtes de l'associaèdre de
  dimension
  $n-1$, qui est un polytope régulier.  Le second point résulte du
  premier et du théorème \ref{principal}.  Le dernier point est une
  conséquence du premier.
\end{proof}

\begin{proposition}
  \label{bicubique}
  Pour le treillis de Tamari $\Tam_n$, le degré en $(x,y,\yb)$ d'un
  monôme de $\quat_n$ est au moins $n-1$. Le cardinal de l'ensemble
  des intervalles de degré $n-1$ en $(x,y,\yb)$ est le nombre de
  cartes enracinés bicubiques, donné par la suite
  \href{http://oeis.org/A000257}{A257}.
\end{proposition}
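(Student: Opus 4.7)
La preuve se déroule en deux temps : on établit la minoration $\geq n-1$ par récurrence sur $n$ à l'aide des équations fonctionnelles~\eqref{eq_Phi} et~\eqref{eq_Theta}, puis on extrait la partie de plus bas degré pour en dériver une équation algébrique, à comparer avec celle des cartes bicubiques enracinées.

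Pour la minoration, le cas $n=1$ est immédiat. Pour le pas de récurrence, chacun des trois termes non triviaux du membre de droite de~\eqref{eq_Theta} apporte un facteur $y$, $xy$, ou $(x-xy)$ (de degré $\geq 1$ en $(x,y,\yb)$) multipliant une expression dont le coefficient $[t^{n-1}]$ est de degré $\geq n-2$ par récurrence ; le degré total est donc $\geq n-1$. Pour~\eqref{eq_Phi}, le terme $\yb\,\Phi(v,v)\Theta(u,v)/v$ se décompose en produit $[t^k]\Phi \cdot [t^{n-k}]\Theta$ avec $1 \leq k \leq n-1$ ; la somme des degrés minimaux $(k-1)+(n-k-1) = n-2$ est relevée d'une unité par le facteur $\yb$, d'où $\geq n-1$.

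Pour l'énumération, on extrait les parties $\Phi^\flat$, $\Theta^\flat$ de $\Phi$ et $\Theta$ dont chaque coefficient $[t^n]$ est exactement de degré $n-1$ en $(x,y,\yb)$. Seuls survivent dans~\eqref{eq_Theta} les termes ajoutant précisément $+1$ au degré :
\begin{equation*}
  \Theta^\flat(u,v) = tuv + tyuv\,\frac{\Phi^\flat(u,1)-\Phi^\flat(1,1)}{u-1} + tvx\,\Phi^\flat(u,u),
\end{equation*}
tandis que~\eqref{eq_Phi} se restreint en $\Phi^\flat(u,v) = \Theta^\flat(u,v)\bigl(1+\yb\,\Phi^\flat(v,v)/v\bigr)$. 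Après substitution $x = y = \yb = 1$ et évaluation successive en $v=1$ puis $v=u$, on obtient un système à une seule variable catalytique $u$, auquel on applique la méthode de Bousquet--Mélou et Jehanne~\cite{BM_Jehanne} (comme en section~\ref{algeb}) pour dériver une équation algébrique pour $\Phi^\flat(1,1)(t)$. Il reste à reconnaître cette équation comme celle de la série génératrice des cartes bicubiques enracinées (\href{http://oeis.org/A000257}{A257}) ; les premiers coefficients $1, 3, 12, \ldots$ pour $n = 1, 2, 3, \ldots$ (vérifiés par un calcul direct sur $\Tam_3$, où seul l'intervalle $(a,b)$ de degré $3$ échappe au compte) et l'unicité des séries solutions permettent de conclure.

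\emph{Principale difficulté.} L'obstacle essentiel est l'étape finale : l'élimination produisant l'équation algébrique puis sa comparaison avec celle connue pour les cartes bicubiques sont techniquement lourdes et se délèguent au calcul formel. Une bijection explicite entre les intervalles de degré minimum et les cartes bicubiques, éventuellement via les triangulations de~\cite{bebo}, serait plus satisfaisante.
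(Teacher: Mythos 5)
Ta preuve suit essentiellement la même démarche que celle du papier : minoration par récurrence simultanée sur $\Phi$ et $\Theta$ via les équations fonctionnelles~\eqref{eq_Phi}--\eqref{eq_Theta}, puis extraction des parties de degré minimal, spécialisation en $x=y=\yb=1$ (donnant les mêmes équations restreintes que celles du papier), et application de la méthode de~\cite{BM_Jehanne} pour obtenir et identifier l'équation algébrique de A257. La seule différence est que tu ne fournis pas explicitement l'équation algébrique finale ($16 \Phi_{11}^{''2} t^{2} + 24 \Phi''_{11} t^{2} - 12 \Phi''_{11} t + 9 t^{2} + \Phi''_{11} - t = 0$), mais la stratégie et les calculs intermédiaires coïncident.
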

\begin{proof}
  La borne inférieure s'obtient par récurrence en utilisant la forme
  des équations fonctionnelles \eqref{eq_Phi} et \eqref{eq_Theta} pour
  montrer simultanément le même énoncé pour $\Phi$ et $\Theta$.

  Pour obtenir le cardinal, on réduit les équations fonctionnelles en
  imposant la contrainte voulue sur le degré et en posant $x=1$, $y=1$ et
  $\yb=1$ :
  \begin{equation}
    \Phi''(u,v) = \Theta''(u,v) + \Phi''(v,v) \Theta''(u,v) / v,
  \end{equation}
  et
  \begin{equation}
    \Theta''(u,v) = t v \Big{(}u +
    u \frac{\Phi''(u,1)-\Phi''(1,1)}{u - 1} + \Phi''(u,u) \Big{)}.
  \end{equation}
  Pour en déduire une équation algébrique pour $\Phi''(1,1)$, on procède
  comme dans la section \ref{algeb}. On spécialise ces deux équations
  en $v=1$ et $v=u$, obtenant ainsi $4$ équations. On élimine
  $\Theta''(u,1)$, $\Theta''(u,u)$ et $\Phi''(u,1)$ pour obtenir une
  équation catalytique usuelle sous la forme d'un polynôme en
  $\Phi''(u,u),\Phi''(1,1),t,u$. On en déduit par les méthodes de \cite{BM_Jehanne}, l'équation algébrique
  \begin{equation}
    16 \Phi_{11}^{''2} t^{2} + 24 \Phi''_{11} t^{2} - 12 \Phi''_{11} t + 9 t^{2} + \Phi''_{11} -  t,
  \end{equation}
  qu'on peut aisément comparer à celle connue pour la série
  génératrice de la suite \href{http://oeis.org/A000257}{A257}.
\end{proof}

\begin{remark}
  On peut aussi obtenir par une restriction similaire des équations
  fonctionnelles pour l'ensemble des intervalles qui sont à la fois de
  degré $n-1$ en $(y,\yb)$ et de degré $n-1$ en $(x,\yb)$ .
\end{remark}

\subsection{Questions ouvertes sur la distribution en $\{x,y,\yb,\xb\}$}

Question : peut-on écrire une équation catalytique pour la série complète en $x,y,\yb,\xb$ ?

\begin{conjecture}
Les seuls intervalles de degré  $n-1$ en $(x,y,\yb,\xb)$ sont les intervalles simples $(S,S)$. 
\end{conjecture}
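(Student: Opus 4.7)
Le plan se décompose en deux volets, reposant chacun sur un résultat déjà établi dans cette section. Je commencerais par la partie immédiate : tout intervalle trivial $(S,S)$ est de degré total $n-1$ en $(x,y,\yb,\xb)$. En effet, le premier lemme de cette section donne $\deg_x(S,S) = \deg_{\xb}(S,S) = 0$, et le diagramme de Hasse de $\Tam_n$ étant le $1$-squelette de l'associaèdre de dimension $n-1$, c'est un graphe $(n-1)$-régulier. Il vient donc $\deg_y(S,S) + \deg_{\yb}(S,S) = \operatorname{out}(S) + \operatorname{in}(S) = n-1$, ce qui assure la compatibilité de l'énoncé avec les intervalles triviaux.

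Pour la réciproque, qui est le cœur de la conjecture, j'utiliserais conjointement la proposition \ref{bicubique} et le premier lemme. Soit $(S,T)$ un intervalle de degré total égal à $n-1$ en $(x,y,\yb,\xb)$. La proposition \ref{bicubique} fournit la minoration $\deg_{(x,y,\yb)}(S,T) \geq n-1$ ; les degrés en chaque variable étant positifs ou nuls, la somme totale étant exactement $n-1$ force alors $\deg_{\xb}(S,T) = 0$. Par le premier lemme, cette dernière égalité entraîne $S = T$, c'est-à-dire que l'intervalle est trivial.

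On pourrait également mener ce raisonnement de façon duale : l'invariance de $\quat_n$ sous l'involution $x \leftrightarrow \xb$, $y \leftrightarrow \yb$ (provenant de l'auto-dualité de $\Tam_n$) entraîne $\deg_{(\xb,\yb,y)}(S,T) \geq n-1$, ce qui permet de forcer $\deg_x(S,T) = 0$ et d'appliquer à nouveau le premier lemme. Les deux voies mènent à la même conclusion.

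L'étape réellement substantielle n'est donc pas dans le raisonnement proposé ci-dessus, mais dans la proposition \ref{bicubique} elle-même, qui repose sur une équation catalytique et une élimination algébrique à la \cite{BM_Jehanne}. L'obstacle principal, si l'on visait une preuve purement combinatoire de la conjecture, serait d'établir directement la minoration $\deg_{(x,y,\yb)}(S,T) \geq n-1$ sans passer par les séries génératrices, ce qui fournirait au passage une nouvelle démonstration combinatoire de la proposition \ref{bicubique}.
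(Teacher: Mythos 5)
The paper presents this statement as an open \emph{conjecture}: the only justification offered after the statement is that simple intervals $(S,S)$ clearly have degree $n-1$ because the Hasse diagram of $\Tam_n$ is $(n-1)$-regular, and no argument at all is given for the converse inclusion. Your argument for the converse is complete and correct using what the paper already establishes: Proposition~\ref{bicubique} gives $\deg_{(x,y,\yb)}(S,T) \geq n-1$ for every interval, so a total degree in $(x,y,\yb,\xb)$ equal to $n-1$ forces $\deg_{\xb}(S,T)=0$, whence $S=T$ by the first lemma of the section. The dual variant you sketch works as well, since the invariance of the set of monomials of $\quat_n$ under $(x,y,\yb,\xb)\mapsto(\xb,\yb,y,x)$ transfers the lower bound of Proposition~\ref{bicubique} to the triple $(\xb,\yb,y)$ and forces $\deg_x(S,T)=0$ instead. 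Modulo Proposition~\ref{bicubique} (whose inductive proof via \eqref{eq_Phi} and \eqref{eq_Theta} is sound), you have therefore actually proven the conjecture; it appears the author overlooked that it follows directly from results established just a few lines earlier, and the statement should be promoted to a corollary.
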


Les intervalles simples ont clairement ce degré, parce que le diagramme de Hasse est un graphe régulier. 

\medskip

\begin{conjecture}
  Le nombre d'intervalles de degré $n-1$ en $(x,y)$ et de degré $n-1$
  en $(\xb,\yb)$ est un nombre de Motzkin (\href{http://oeis.org/A001006}{A1006}).
\end{conjecture}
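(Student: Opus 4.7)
The plan is to follow the same strategy as the proofs of Theorem \ref{synchrone} and Proposition \ref{bicubique}: translate the double degree-maximality condition into simplifications of the functional equations \eqref{eq_Phi} and \eqref{eq_Theta}, apply the method of \cite{BM_Jehanne} to extract an algebraic equation for $\Phi(1,1)$, and compare it with the standard equation $M = 1 + tM + t^2 M^2$ for the Motzkin generating function \href{http://oeis.org/A001006}{A1006}.

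The first step is to determine which terms of \eqref{eq_Phi} and \eqref{eq_Theta} survive after imposing degree $n-1$ in $(x,y)$. By the lemma recalled in Section~4, the total $(x,y)$-degree of $\quat_n$ is bounded by $n-1$, so maximality forces every recursive step of the construction to contribute optimally to $(x,y)$; any term producing a $\yb$ without a compensating $(x,y)$ growth must be discarded. Concretely, one expects to drop the $\yb\,\Phi(v,v)\,\Theta(u,v)/v$ term in \eqref{eq_Phi} (responsible for decomposable intervals and carrying a free $\yb$), and to suppress in \eqref{eq_Theta} the contributions that do not carry an $x$ or a $y$. This is in direct analogy with the restriction that gave synchronous intervals.

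The second, more delicate step is to simultaneously impose degree $n-1$ in $(\xb,\yb)$. The functional equations as stated only track $x,y,\yb$ (with $\xb$ specialized to $1$), so one must either enrich them by analyzing when a new $\xb$-type cover is produced in the recursive construction — less transparent than for $x$, since the decomposition along the left border is not symmetric in this respect — or invoke the self-duality involution of $\Tam_n$ exchanging $x\leftrightarrow\xb$ and $y\leftrightarrow\yb$, under which the condition on $(\xb,\yb)$ for $(S,T)$ becomes the condition on $(x,y)$ for the dual interval $(T^\star,S^\star)$. In either case, the desired set is stable under self-duality and presumably admits a decomposition involving two catalytic parameters (one for each border), leading to a system with more unknowns than the ones treated in Section \ref{algeb}.

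Once the joint restricted system is at hand, the final step is routine: specialize $v=1$ and $v=u$, eliminate the auxiliary series $\Theta$ and $\Phi(u,1)$ to obtain an ordinary catalytic equation in $\Phi(u,u),\Phi(1,1),t,u$, apply \cite{BM_Jehanne} to eliminate the remaining catalytic variable, and identify the resulting algebraic equation for $\Phi(1,1)$ with the Motzkin equation above, possibly after factoring out extraneous components. The main obstacle is certainly the second step: producing a tractable functional equation that controls $\xb$ without relying on the still-open Conjecture \ref{conjecture_x_xbar}. If this route proves too heavy, the most satisfactory substitute would be a direct combinatorial bijection between this intersection of two facets and a classical Motzkin-counted family, such as Motzkin paths or plane unary-binary trees.
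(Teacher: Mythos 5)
This statement is a \emph{conjecture} in the paper, appearing under ``Questions ouvertes sur la distribution en $\{x,y,\yb,\xb\}$''; the paper offers no proof of it, only the observation that it is equivalent, modulo Conjecture~\ref{conjecture_x_xbar}, to the following proposition on $(x,\yb)$ and $(\xb,y)$. So you are not comparing against a paper proof but rather against the paper's explicit admission that this is open.

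Your sketch correctly identifies the genuine obstruction, and you are honest about it: the catalytic system \eqref{eq_Phi}--\eqref{eq_Theta} has $\xb$ already specialized to $1$, and no catalytic equation keeping all four variables is available, so one cannot translate ``degree $n-1$ in $(\xb,\yb)$'' into a restriction of the known equations. Invoking self-duality merely shows the set is stable under the involution; it does not produce a functional equation for it, and reducing the $(\xb,\yb)$-condition to the $(x,y)$-condition on the dual interval only restates the problem. Your route therefore stalls exactly where the paper's does, and for the same reason. One smaller issue: your ``first step'' is too optimistic. Dropping the $\yb\,\Phi(v,v)\Theta(u,v)/v$ term of \eqref{eq_Phi} outright would restrict to indecomposable intervals, which is not what the degree-$(n-1)$-in-$(x,y)$ condition means; compare with the actual surgery used for the synchronous case and Proposition~\ref{bicubique}, where a single piece of the expression $(x-xy)\Phi(u,u)$ is removed, not a whole structural term. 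But this is a secondary inaccuracy; the central gap is the uncontrolled $\xb$, which you flag yourself. Until either a four-variable catalytic equation or an explicit bijection with a Motzkin family is found, this remains, as in the paper, a conjecture rather than a theorem.
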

L'ensemble d'intervalles correspondant contient clairement
l'intervalle formé par $\Tam_n$ tout entier. Il semble que le poset
induit par $\Int(\Tam_n)$ sur cet ensemble d'intervalles soit une
antichaine.

\begin{proposition}
   Le nombre d'intervalles de degré $n-1$ en $(x,\yb)$ et de degré  $n-1$ en $(\xb,y)$ est un nombre de Motzkin.
\end{proposition}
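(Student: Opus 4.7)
Following Propositions~\ref{synchrone} and~\ref{bicubique}, I would first translate the joint condition combinatorially. Since the Hasse diagram of $\Tam_n$ is $(n-1)$-regular (being the edge graph of the associaèdre), the degree in $(x,\yb)$ attains its maximum $n-1$ if and only if every couverture ascendante de $S$ dans $\Tam_n$ lies in $[S,T]$ (condition A), and the degree in $(\xb,y)$ attains its maximum $n-1$ if and only if every couverture descendante de $T$ lies in $[S,T]$ (condition B). Both A and B are preserved by the décomposition en barres obliques des intervalles, because the grafting in $S_0/S_1$ creates only a $\backslash$-arête and so produces no new up-cover (dually for $T_0/T_1$). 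Hence les intervalles satisfying $A\wedge B$ admit une structure produit compatible with the decomposition underlying~\eqref{eq_Phi}.

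Define generating functions $\Phi'''(u,v)$ et $\Theta'''(u,v)$ for all (respectively indecomposable) intervals satisfying $A\wedge B$, with the same catalytic variables as in Section~\ref{series_gens}. The décomposition le long du bord gauche gives
\begin{equation*}
  \Phi'''(u,v) = \Theta'''(u,v) + \Phi'''(v,v)\,\Theta'''(u,v)/v.
\end{equation*}
For $\Theta'''$ I would analyze the récursion indécomposable $(S,T) = (Y\backslash S', Y *_a T')$. Condition A, exactly as in the proof of~\eqref{eq_Theta}, selects the termes marqués par $x$: one needs $a$ dans la partie supérieure $V$ du bord gauche de $T'$. For condition B I would use the description of $[S,T]$ from Section~\ref{section-tamari}: the rotation at the new $\backslash$-arête $P\to N$ of $T$ always yields an arbre of the form $Y *_b T'$ with $b$ l'arête juste en dessous de $a$, which lies automatically dans $[S,T]$; the remaining down-covers of $T$ are héritées from down-covers of $T'$ via~$Y *_a$, and lie in $[S,T]$ precisely when condition~B holds on $(S',T')$ together with a compatibilité de position on the attaching edge. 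Combining the two restrictions should yield an equation for $\Theta'''(u,v)$ of the same shape as~\eqref{eq_Theta}.

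Finally, exactly as in the proofs of Propositions~\ref{synchrone} and~\ref{bicubique}, I would specialize at $v=1$ and $v=u$, eliminate $\Theta'''$ and $\Phi'''(u,1)$ to obtain a polynomial equation $P(\Phi'''(u,u),\Phi'''(1,1),t,u)=0$, and apply the method of~\cite{BM_Jehanne} (system $P = \partial_{\Phi'''(u,u)}P = \partial_u P = 0$, then elimination of $\Phi'''(u,u)$ and $u$) to extract an algebraic equation for $\Phi'''(1,1)$. Comparison with the well-known algebraic equation for the série génératrice des nombres de Motzkin~\href{http://oeis.org/A001006}{A1006} concludes.

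The principal difficulty will be the third paragraph: verifying that the positional constraint coming from condition~B fits cleanly into the same $U/V$-découpage du bord gauche that already encodes condition~A, so that the system closes with only the two catalytic variables $u$ and $v$. If the naive encoding does not close, one can exploit the fact that the set of $(A\wedge B)$-intervals is invariant under the involution d'auto-dualité $x\leftrightarrow\xb$, $y\leftrightarrow\yb$ (which swaps conditions A and B), and derive the B-restriction from the A-restriction applied to the dual interval, thereby avoiding the introduction of a third catalytic variable.
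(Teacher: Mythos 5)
The paper supplies no proof of this proposition. The only accompanying text is the remark that the statement is equivalent to the preceding conjecture (degree $n-1$ in $(x,y)$ and in $(\xb,\yb)$) \emph{modulo} Conjecture~\ref{conjecture_x_xbar}, which is itself open. So there is no written argument in the paper to compare against; the label ``Proposition'' notwithstanding, the source text treats it as conditional on that conjecture.

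As for your own plan, the high-level translation is right: degree $n-1$ in $(x,\yb)$ means every cover of $S$ lies in $[S,T]$ (condition~A), and degree $n-1$ in $(\xb,y)$ means every cocover of $T$ lies in $[S,T]$ (condition~B), both using that the Hasse diagram of $\Tam_n$ is the $(n-1)$-regular associahedron graph. The product compatibility under the slash decomposition is also correct, so the $\Phi'''$ equation would indeed have the shape of~\eqref{eq_Phi}. The gap is exactly where you flag it, and it is not a small one. The system \eqref{eq_Phi}--\eqref{eq_Theta} tracks $x,y,\yb$ and the two catalytic variables $u,v$ but carries no information about $\xb$; the $U/V$-découpage of the left boundary of $T$ encodes where the new leaf attaches (hence whether a new $x$-edge appears), but it does not record which of the inherited $\backslash$-edges of $T'$ rotate to something $\geq S$. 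You would need to show that condition~B on $(S,T)=(Y\backslash S',\,Y*_a T')$ reduces to condition~B on $(S',T')$ together with a constraint on $a$ expressible through the same $U/V$ split — and that reduction is precisely what is missing. The fallback via self-duality does not close this: the involution $x\leftrightarrow\xb$, $y\leftrightarrow\yb$ does preserve the set of $(A\wedge B)$-intervals, but the slash decomposition along the left border is not self-dual (the dual of an indecomposable interval need not be indecomposable in the same sense), so transporting condition~B through the involution does not land you back in a form the recursion can see. Without resolving one of these two points, the claimed equation for $\Theta'''$ is unsupported, and the rest of the computation (specialization at $v=1,u$, elimination, \cite{BM_Jehanne}, comparison with the Motzkin equation) cannot start.
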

Cet énoncé est équivalent au précédent modulo la conjecture \ref{conjecture_x_xbar}.




\subsection{Racines réelles négatives}

Parmi les manières de spécialiser en $1$ certaines des variables dans
$\quat_n$ pour obtenir un polynôme en une variable, il semble s'en
trouver trois types (modulo les symétries de $\quat_n$) qui donnent
des polynômes ayant seulement des racines réelles négatives : ce sont
$\quat_n(z,1,1,1)$, $\quat_n(z,z,1,1)$, $\quat_n(z,z,z,1)$.

On peut aussi extraire d'autres polynômes en une variable par
restriction à des sous-ensembles. On obtient notamment ainsi les
polynômes de Narayana \href{http://oeis.org/A001263}{A1263}, dont on
sait que les racines sont réelles négatives (voir \cite[\S 5]{branden}
pour des références sur ce résultat).

On peut aussi obtenir un polynôme qui compte les permutations
triables-par-deux-piles selon un paramètre, voir \href{http://oeis.org//A082680}{A82680}. Le fait que ces polynômes ont seulement des zéros réels négatifs a été démontré par Brändén \cite[Th. 5.1]{branden}.

\section{Relation avec les statistiques de canopée}

\label{canopee}

Une autre classe de statistiques naturelles sur les intervalles de
Tamari est fournie par la description de leurs canopées.

À un intervalle de Tamari, on peut associer un mot en trois ``double-lettres'' $\LL$, $\LR$ et $\RR$. Dans ce mot, la lettre en position
$i$ décrit les orientations des feuilles en position $i$ dans le
minimum et le maximum de l'intervalle. Par exemple, l'intervalle
\begin{center}
  \label{arbre}
  \includegraphics[height=2cm]{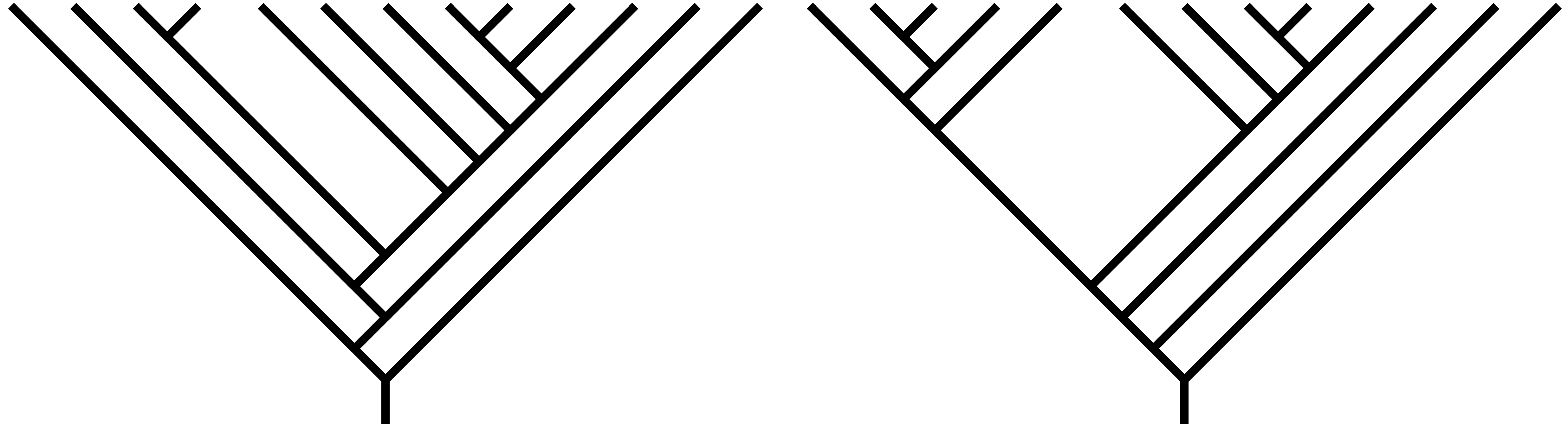}
\end{center}
a pour mot associé
$(\LL,\LL,\LR,\RR,\LR,\LL,\LL,\LL,\RR,\RR,\RR,\RR,\RR)$. On peut
montrer (en considérant l'action de la rotation sur les
canopées) que seules les trois combinaisons $(L,L),(L,R)$ et $(R,R)$
sont possibles, d'où l'emploi de ces trois double-lettres.

Par retournement des arbres binaires plans et des intervalles, on a
une symétrie évidente d'ordre $2$ qui échange $\LL$ avec $\RR$. La
symétrie d'ordre $3$ en $x,y,\yb$ est liée à cette symétrie d'ordre
$2$ par la relation suivante.

\begin{proposition}
  La double-distribution selon les variables $y$ et $\yb$ coïncide
  avec la double-distribution selon les variables $\LL$ et $\RR$.
\end{proposition}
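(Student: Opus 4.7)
The plan is to establish, on every interval $(S,T)$, the two pointwise identities
$\#_\LL(S,T) = \deg_y(S,T) + 1$ and $\#_\RR(S,T) = \deg_\yb(S,T) + 1$, where $\#_\LL$ and $\#_\RR$ count letters of the double-canopy of $(S,T)$. The extra $+1$ reflects the fact that the first position of any double-canopy is forced to be $\LL$ (the leftmost leaf of any tree is a left child) and the last is forced to be $\RR$; after removing these constant contributions, the two double-distributions coincide, as announced.

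The first ingredient is a canopy-comparison lemma: if $S \couvert T$ is a cover in $\Tam_n$, realized as a rotation at the edge between an internal node $v$ (with left subtree $A$) and its internal right child $w$ (with subtrees $B$ and $C$), then the left/right label of any leaf belonging to $A$, $B$ or $C$ is determined by its parent inside that subtree and is therefore unchanged by the rotation, with the sole exception that when $B$ reduces to a single leaf, this leaf switches from being the left child of $w$ (label $L$) to being the right child of the newly created node $v(A,B)$ (label $R$). Hence a Tamari cover can only turn an $L$ into an $R$, and by transitivity $S \le T$ implies that every $L$-position in the canopy of $T$ is also an $L$-position in the canopy of $S$. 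This confirms that the only three possible double-canopy letters are $\LL$, $\LR$ and $\RR$ (as already recalled), and gives
\begin{equation*}
  \#_\LL(S,T) = |T|_L, \qquad \#_\RR(S,T) = |S|_R,
\end{equation*}
where $|{\cdot}|_L$ and $|{\cdot}|_R$ count the occurrences of $L$ and $R$ in the canopy of a single tree.

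The second ingredient is a single-tree count relating $|T|_L$ to the power of $y$. Classify the internal nodes of a binary tree with $n$ internal nodes by the types of their two children, and let $a$, $b$, $c$, $d$ be the numbers of nodes of respective types (leaf,\,leaf), (leaf,\,internal), (internal,\,leaf) and (internal,\,internal). Then $|T|_L = a + b$, while the number of internal edges of $T$ of direction $/$ (the statistic counted by $y$ in any interval with maximum $T$) equals $b + d$. Combining $a + b + c + d = n$ with the total internal-edge count $(b+d) + (c+d) = n - 1$ gives $a = d + 1$, so $|T|_L = (b+d) + 1 = \deg_y(S,T) + 1$. The mirror argument applied to $S$ and to $\backslash$-edges yields $|S|_R = \deg_\yb(S,T) + 1$.

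Putting the two ingredients together produces the two pointwise identities and hence the proposition. The only genuinely non-trivial step is the canopy-comparison lemma for rotations; the rest is elementary counting. A more conceptual argument, for instance via the sylvester quotient of the weak order or the explicit bijection with triangulations alluded to earlier, would be desirable, but the direct rotation analysis is the most economical route.
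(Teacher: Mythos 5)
Your proof is correct, and it takes a genuinely different and in fact stronger route than the paper. The paper derives functional equations for the $(\LL,\LR,\RR)$ generating series (with one catalytic variable, following the decomposition of \cite{chapoton_SLC}), normalizes by setting $\LR=1$ and dividing by $\LL\,\RR$, and then observes that the resulting system is identical to the $x=1$, $v=u$ specialization of \eqref{eq_Phi}--\eqref{eq_Theta} under $\LL=y$, $\RR=\yb$; the equality of double-distributions then follows because both series satisfy the same equation. You instead establish the \emph{pointwise} identities $\#_\LL(S,T)=\deg_y(S,T)+1$ and $\#_\RR(S,T)=\deg_\yb(S,T)+1$ on each interval, which is a strictly stronger statement. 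Both ingredients check out: the canopy-comparison lemma (a cover rotation $v(A,w(B,C))\couvert w(v(A,B),C)$ flips at most one $L$ to an $R$, namely when $B$ is a leaf) correctly gives $\#_\LL(S,T)=|T|_L$ and $\#_\RR(S,T)=|S|_R$, and the elementary count on a single tree (with $\ell_L+\ell_R=n+1$ leaves classified by side, and $\deg_y=n-\ell_R$ slash-edges) gives $|T|_L=\deg_y+1$; the mirror statement for $|S|_R$ follows likewise. You also correctly identify the unit shift in each variable, which in the paper corresponds precisely to the division by $\LL\,\RR$. Your approach buys transparency and a combinatorial explanation that no generating-function manipulation reveals; the paper's approach buys economy, since it reuses machinery already needed for Theorem~\ref{principal}. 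It would be worth recording the pointwise identity explicitly, as it is cleaner than the distributional statement and makes the $+1$ normalization self-evident rather than an artifact of the algebra.
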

\begin{proof}
Pour les trois paramètres $\LL$, $\LR$ et $\RR$ décrivant les
canopées, on démontre comme dans la section \ref{series_gens}, en utilisant la même décomposition introduite dans \cite{chapoton_SLC}, les équations
\begin{equation}
  \Phi(u) = \Theta(u) + \Phi(u) \Theta(u) / (u \LL),
\end{equation}
et
\begin{equation}
  \Theta(u) = t u \left(\LL\RR u +
    u \LL \frac{\Phi(u)- \Phi(1)}{u - 1} +
    (\LR-\LL)\Phi(u) \right).
\end{equation}
Comme les coefficients de ces séries sont homogènes, on peut sans perte d'information remplacer $\LR$ par $1$. On peut aussi diviser tous les termes par $\LL\,\RR$. Les équations obtenues sont
\begin{equation}
  \label{eq_Phi_canopee}
  \Phi(u) = \Theta(u) + \RR \Phi(u) \Theta(u) / u,
\end{equation}
et
\begin{equation}
  \label{eq_Theta_canopee}
  \Theta(u) = t u \left(u +
    u \LL \frac{\Phi(u)- \Phi(1)}{u - 1} +
     (1-\LL)\Phi(u) \right).
\end{equation}
On voit sans difficulté que ces équations sont identiques à la
spécialisation de \eqref{eq_Phi} et \eqref{eq_Theta} en $x=1$ et $v=u$, en
identifiant les variables $\LL=y$ et $\RR = \yb$.

On a donc obtenu la relation voulue entre la double-statistique
$(y,\yb)$ et la double statistique $(\LL,\RR)$.
\end{proof}

Voici un tableau de cette double-distribution pour $n=1,2,3,4,5$ :
\begin{equation*}
\left(\begin{array}{r}
1
\end{array}\right)\left(\begin{array}{rr}
1 & 0 \\
1 & 1
\end{array}\right)\left(\begin{array}{rrr}
1 & 0 & 0 \\
3 & 4 & 0 \\
1 & 3 & 1
\end{array}\right)\left(\begin{array}{rrrr}
1 & 0 & 0 & 0 \\
6 & 10 & 0 & 0 \\
6 & 21 & 10 & 0 \\
1 & 6 & 6 & 1
\end{array}\right)\left(\begin{array}{rrrrr}
1 & 0 & 0 & 0 & 0 \\
10 & 20 & 0 & 0 & 0 \\
20 & 81 & 49 & 0 & 0 \\
10 & 65 & 81 & 20 & 0 \\
1 & 10 & 20 & 10 & 1
\end{array}\right)
\end{equation*}

\begin{remark}
  On ne peut pas espérer une expression simple sous forme de produit
  de coefficients binomiaux pour les coefficients de cette double-distribution, car certains d'entre eux comportent de grands facteurs
  premiers (un coefficient dans $[t^9]\Phi$ vaut $84089$). Il en est
  de même pour les coefficients de la triple-distribution en $x,y,\yb$ avec
  par exemple un coefficient $18691$ dans $[t^9]\Phi$.
\end{remark}

\bibliographystyle{alpha}
\bibliography{bilan_double_catalyse}

\begin{thebibliography}{BMCPR13}

\bibitem[BB09]{bebo}
Olivier Bernardi and Nicolas Bonichon.
\newblock Intervals in {C}atalan lattices and realizers of triangulations.
\newblock {\em J. Combin. Theory Ser. A}, 116(1):55--75, 2009.

\bibitem[BMCPR13]{BM_et_al1}
Mireille Bousquet-M\'elou, Guillaume Chapuy, and Louis-Fran\c{c}ois
  Pr\'eville-Ratelle.
\newblock The representation of the symmetric group on {$m$}-{T}amari
  intervals.
\newblock {\em Adv. Math.}, 247:309--342, 2013.

\bibitem[BMFPR11]{BM_et_al2}
Mireille Bousquet-M\'elou, \'Eric Fusy, and Louis-Fran\c{c}ois
  Pr\'eville-Ratelle.
\newblock The number of intervals in the {$m$}-{T}amari lattices.
\newblock {\em Electron. J. Combin.}, 18(2):Paper 31, 26, 2011.

\bibitem[BMJ06]{BM_Jehanne}
Mireille Bousquet-M\'elou and Arnaud Jehanne.
\newblock Polynomial equations with one catalytic variable, algebraic series
  and map enumeration.
\newblock {\em J. Combin. Theory Ser. B}, 96(5):623--672, 2006.

\bibitem[Br{\"a}06]{branden}
Petter Br{\"a}nd{\'e}n.
\newblock On linear transformations preserving the {P}\'olya frequency
  property.
\newblock {\em Trans. Amer. Math. Soc.}, 358(8):3697--3716, 2006.

\bibitem[CCP14]{CCP}
Fr\'ed\'eric Chapoton, Gr\'egory Ch\^atel, and Viviane Pons.
\newblock Two bijections on {T}amari intervals.
\newblock In {\em 26th {I}nternational {C}onference on {F}ormal {P}ower
  {S}eries and {A}lgebraic {C}ombinatorics ({FPSAC} 2014)}, Discrete Math.
  Theor. Comput. Sci. Proc., AT, pages 241--252. Assoc. Discrete Math. Theor.
  Comput. Sci., Nancy, 2014.

\bibitem[Cha07]{chapoton_SLC}
Fr{\'e}d{\'e}ric Chapoton.
\newblock Sur le nombre d'intervalles dans les treillis de {T}amari.
\newblock {\em S\'em. Lothar. Combin.}, 55:Art. B55f, 18, 2005/07.

\bibitem[FPR17]{Fang_PR}
Wenjie Fang and Louis-Fran\c{c}ois Pr\'eville-Ratelle.
\newblock The enumeration of generalized {T}amari intervals.
\newblock {\em European J. Combin.}, 61:69--84, 2017.

\bibitem[MHPS12]{livre_tamari}
Folkert M\"uller-Hoissen, Jean~Marcel Pallo, and Jim Stasheff, editors.
\newblock {\em Associahedra, {T}amari lattices and related structures}, volume
  299 of {\em Progress in Mathematical Physics}.
\newblock Birkh\"auser/Springer, Basel, 2012.
\newblock Tamari memorial Festschrift.

\bibitem[PRV17]{PR_viennot}
Louis-Fran\c{c}ois Pr\'eville-Ratelle and Xavier Viennot.
\newblock The enumeration of generalized {T}amari intervals.
\newblock {\em Trans. Amer. Math. Soc.}, 369(7):5219--5239, 2017.

\end{thebibliography}

\end{document}